\patchcmd{\subsection}{-.5em}{.5em}{}{}
\begin{document}

\newtheorem{definition}{Definition}[section]
\newtheorem{definitions}[definition]{Definitions}
\newtheorem{deflem}[definition]{Definition and Lemma}
\newtheorem{lemma}[definition]{Lemma}
\newtheorem{proposition}[definition]{Proposition}
\newtheorem{theorem}[definition]{Theorem}
\newtheorem{corollary}[definition]{Corollary}
\newtheorem{cors}[definition]{Corollaries}
\theoremstyle{remark}
\newtheorem{remark}[definition]{Remark}
\theoremstyle{remark}
\newtheorem{remarks}[definition]{Remarks}
\theoremstyle{remark}
\newtheorem{notation}[definition]{Notation}
\theoremstyle{remark}
\newtheorem{example}[definition]{Example}
\theoremstyle{remark}
\newtheorem{examples}[definition]{Examples}
\theoremstyle{remark}
\newtheorem{dgram}[definition]{Diagram}
\theoremstyle{remark}
\newtheorem{fact}[definition]{Fact}
\theoremstyle{remark}
\newtheorem{illust}[definition]{Illustration}
\theoremstyle{remark}
\newtheorem{rmk}[definition]{Remark}
\theoremstyle{definition}
\newtheorem{que}[definition]{Question}
\theoremstyle{definition}
\newtheorem{conj}[definition]{Conjecture}
\newtheorem{por}[definition]{Porism}

\renewenvironment{proof}{\noindent {\bf{Proof.}}}{\hspace*{3mm}{$\Box$}{\vspace{9pt}}}
\author[Shukla, Jain, Kuber]{Himanshu Shukla, Arihant Jain and Amit Kuber}
\address{Department of Mathematics and Statistics\\Indian Institute of Technology, Kanpur\\Uttar Pradesh, India}
\email{hshukla@iitk.ac.in, arihantj@iitk.ac.in, askuber@iitk.ac.in}
\title{{Definable combinatorics with dense linear orders}}
\keywords{Grothendieck ring, pigeonhole principle, dense linear order}
\subjclass[2010]{03C64, 06A05, 18F30}

\begin{abstract}
We compute the model-theoretic Grothendieck ring, $K_0(\mathcal{Q})$, of a dense linear order (DLO) with or without end points, $\mathcal{Q}=(Q,<)$, as a structure of the signature $\{<\}$, and show that it is a quotient of the polynomial ring over $\mathbb{Z}$ generated by $\mathbb N_+\times(Q\sqcup\{-\infty\})$ by an ideal that encodes multiplicative relations of pairs of generators. As a corollary we obtain that a DLO satisfies the pigeon hole principle (PHP) for definable subsets and definable bijections between them--a property that is too strong for many structures.
\end{abstract}

\maketitle

\newcommand\A{\mathcal{A}}
\newcommand\C{\mathcal{C}}
\newcommand\D{\mathcal{D}}
\newcommand\LL{\mathcal{L}}
\newcommand\M{\mathcal{M}}
\newcommand\T{\mathcal{T}}

\newcommand{\N}{\mathbb{N}} 
\newcommand{\R}{\mathbb{R}}
\newcommand{\Z}{\mathbb{Z}}
\newcommand{\Dphi}{\mathcal{D}^{\emptyset}_n}
\newcommand{\Dabar}{\mathcal{D}^{\overline{a}}_n}
\newcommand{\Dbbar}{\mathcal{D}^{\overline{b}}_n}
\newcommand{\Dnq}{\mathcal{D}^{Q}_n}

\newcommand{\Hb}{\mathbb{H}}
\newcommand{\Q}{\mathcal{Q}}
\newcommand{\la}{\langle}
\newcommand{\ra}{\rangle}
\newcommand{\Def}{\mathrm{Def}}
\newcommand{\Deft}{\widetilde{\mathrm{Def}}}
\newcommand{\Atphi}{At_n^{\emptyset}}
\newcommand{\At}{At_n^{\overline{a}}}
\newcommand{\Col}{\mathrm{Color}(n,\overline{a})}
\newcommand{\Split}{\mathrm{Split}^{\overline{a}}_n}
\newcommand{\LC}{\chi_{T}}
\newcommand{\GC}{\chi_n^{\overline{a}}}
\renewcommand{\qedsymbol}{$\blacksquare$}
\newcommand{\Rplus}{[0, \infty)}
\newcommand{\Slz}{SL\_2(\Z)}
\newcommand{\PP}{\mathcal{P}}
\newcommand{\res}{\upharpoonright}
\newcommand{\MLR}{Martin L\"of random}
\newcommand{\lep}{\stackrel{\scriptstyle +}{\le}}
\newcommand{\mmid}{\quad\mid\quad}
\newcommand*\quot[2]{{^{\textstyle #1}\big/_{\textstyle #2}}}

\section{Introduction}
What of elementary combinatorics holds true in a class of first order structures if sets, relations, and maps must be definable? From a purely model-theoretic point of view Kraji\v{c}ek and Scanlon \cite{scanlon} studied definable versions of some combinatorial principles, notably the pigeonhole principle (PHP), in various structures. Such questions about definable sets can be reformulated in terms of certain properties of an algebraic invariant--the Grothendieck ring--associated with the structure. The Grothendieck ring of a structure $\A=(A,\hdots)$, that classifies sets definable with parameters from $A$ up to definable bijections with disjoint union as addition and cartesian product as multiplication, was motivated by the use of the Grothendieck ring of varieties in the theory of motivic integration.

In this paper we study the Grothendieck ring of a dense linear order (DLO) $\Q=(Q,<)$ thought of as a structure of signature consisting of a single binary relation symbol $<$. For the most part we focus our attention to the case when $\Q$ does not have any end points. The theory of DLOs without end points is complete, $\aleph_0$-categorical, and admits complete elimination of quantifiers. Quantifier elimination plays a crucial role in the analysis of definable sets; we get a ``generating set'' for the boolean algebra of definable subsets of a fixed power of $\Q$. We reduce the union of such generators for all powers using product relations to a much smaller collection, the equivalence classes of which are indexed by $\N_+\times(\mathbb Q\sqcup\{-\infty\}$. The Grothendieck ring, $K_0(\Q)$, is the polynomial ring over integers with the above mentioned elements as generators modulo an ideal generated by some combinatorial multiplicative relations (Theorem \ref{iso}).

Further using the analysis of definable subsets of DLOs without end points we show that the Grothendieck ring of an infinite DLO with at least one end point is isomorphic to the Grothendieck ring of the DLO obtained by removing all the end points (Theorem \ref{wep}). As a consequence we show that DLOs satisfy PHP (Corollary \ref{PHP}). In passing we also observe that DLOs do not satisfy both counting of cardinalities principles (CC1 and CC2) (Remark \ref{CCrmk}). We conclude the study of DLOs by proving some interesting combinatorial properties of $K_0(\Q)$.

In his PhD thesis \cite{kuber}, the third author studied Grothendieck rings of two other classes of structures admitting some form of elimination of quantifiers, namely modules and algebraically closed fields. As a result he stated a \emph{transfer principle} \cite[Question~8.2.3]{kuber} which asks whether relations between the generators of the semiring of definable bijection classes of definable sets, if such generators exist, are transferred to the generators of the K-groups, which in our case refers only to $K_0$. Cluckers and Halupczok \cite[\S 1]{cluc_hal} point out that the existence of generators for a semiring is a highly non-trivial. In this paper we demonstrate that the transfer principle holds even for DLOs by exhibiting a simple enough generating set for the Grothendieck semiring which in turn gives a description of the Grothendieck ring in terms of generators and relations. See \S \ref{s7} for a survey of Grothendieck rings of some structures.

The paper is organized as follows. In \S \ref{s7} we briefly recall the construction of the Grothendieck ring, state versions of PHP and describe their interrelations. For a DLO $\Q$ without end points, the structure of atoms of the finite boolean algebra of subsets of $\Q^n$ definable with a finite parameter set are identified in \S \ref{s8}. 
Functions called `global characteristics' that count the number of ``similarity types'' of atoms in the canonical atomic decomposition of a definable set are used in \S \ref{s9} to characterize definable sets up to definable bijections (Theorem \ref{th:semiring}). The semiring thus formed is cancellative (Theorem \ref{cancellative}) and thus embeds into $K_0(\Q)$, 
the computation of which takes up entire \S \ref{s10}. In \S \ref{s12} we compute the Grothendieck ring of an infinite DLO with at least one end point. Finally some interesting combinatorial properties of $K_0(\Q)$ are stated in \S \ref{s11}.

\section{Grothendieck ring of a structure}\label{s7}

Let $\LL$ denote a signature and $\A=(A,\cdots)$ denote a first order $\LL$-structure. We briefly recall the construction of the Grothendieck ring, denoted $K_0(\A)$, of the structure $\A$ from \cite{scanlon}. A definable subset of $\A$ always means a set definable by an $\LL$-formula with parameters from $\A$. A definable bijection between definable sets $D_1\subseteq\A^n$ and $D_2\subseteq\A^m$ is a bijection $f:D_1\to D_2$ whose graph is a definable subset of $\A^{n+m}$. We use the notation $\overline{\Def}(\A)$ to denote the collection of definable subsets of $\A^n$ for arbitrary $n$, and the notation $\Deft(\A)$ to denote the collection of equivalence classes of elements of $\overline{\Def}(\A)$ under definable bijections. We denote the surjective map taking a definable set to its definable bijection equivalence class by $[]:\overline{\Def}(\A)\to\Deft(\A)$. The codomain can be equipped with a semiring structure:
\begin{itemize}
\item $0:=[\{\emptyset\}]$;
\item $1:=[\{a\}]$ for any $a\in A$;
\item $[A]+[B]:= [A'\sqcup B']$, where $A'\cap B'=\emptyset$, $A'\in[A]$, $B'\in[B]$;
\item $[A]\cdot[B]:=[A\times B]$.
\end{itemize}
We say that a semiring is cancellative if $\alpha+\gamma=\beta+\gamma\ \Rightarrow \alpha=\beta$.

The Grothendieck ring $K_0(\A)$ is the quotient of $\Deft(\A)\times\Deft(\A)$ by the equivalence relation defined by $(\alpha,\beta)\sim(\alpha',\beta')$ iff there exists $\gamma$ such that $\alpha+\beta'+\gamma=\alpha'+\beta+\gamma$. The ring thus constructed has the universal property that any semiring map from $\Deft(\A)$ to a ring factors uniquely through $K_0(\A)$.

Following \cite{scanlon} we first state the model-theoretic version of the \emph{Pigeon Hole Principle (PHP)} for a structure. 
\begin{definition}
A structure $\A$ is said to satisfy \emph{PHP} if for every $D_1\subsetneq D_2\in\overline{\Def}(\A)$ there does not exist a definable bijection between $D_1$ and $D_2$.
\end{definition}
It is possible to characterize this combinatorial principle in terms of existence of a special subset of the Grothendieck ring of the structure.
\begin{definition}[Partially ordered ring]
A commutative ring $R$ with unity is said to be \emph{partially ordered} if there exists $P\subseteq R$ such that the following conditions hold.
\begin{enumerate}
    \item $0\in P$,
    \item $1\in P$,
    \item $P+P\subseteq P$,
    \item $P\cdotp P\subseteq P$
    \item $\forall x\ne0\ (x\in P\ \Rightarrow\ -x\notin P)$.
\end{enumerate}
\end{definition}
The subset $P$ in the above definition is usually referred to as the positive part of the ring $R$. Now we are ready to state the promised characterization of PHP.

\begin{theorem}\label{ts8.2}\cite[Theorem~4.3]{scanlon}
A structure $\A$ satisfies PHP iff $(K_0(\A),P)$ is a partially ordered ring such that $[D]\in P$ for each $D\in\overline{\Def}(\A)$.
\end{theorem}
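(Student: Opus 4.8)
The plan is to take for the positive part $P$ the \emph{positive cone} $P=\{\iota([D]):D\in\overline{\Def}(\A)\}$, where $[\,]\colon\overline{\Def}(\A)\to\Deft(\A)$ is the class map and $\iota\colon\Deft(\A)\to K_0(\A)$ is the canonical semiring map. With this choice, conditions $(1)$--$(4)$ in the definition of a partially ordered ring and the requirement $[D]\in P$ for all $D\in\overline{\Def}(\A)$ hold for free: $0=\iota([\emptyset])$, $1=\iota([\{a\}])$, and $P$ is closed under $+$ and $\cdot$ because $\iota([A])+\iota([B])=\iota([A'\sqcup B'])$ and $\iota([A])\cdot\iota([B])=\iota([A\times B])$, a formal disjoint union being coded into a single power of $\A$ in the usual way. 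Both directions of the theorem therefore reduce to one assertion: $\A$ satisfies PHP if and only if condition $(5)$ holds for this $P$. (In the forward direction we produce this $P$; in the backward direction we are given some $P$ with $\iota([D])\in P$ for all $D$ and satisfying $(5)$, and we shall use only $1\in P$, the inclusions $\iota([D])\in P$, and $(5)$.)

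The hinge is the following reformulation: \emph{$\A$ satisfies PHP if and only if $\iota([E])\ne 0$ for every nonempty $E\in\overline{\Def}(\A)$.} Indeed, unwinding the equivalence relation defining $K_0(\A)$, the equation $\iota([E])=0$ holds exactly when there is $\gamma\in\Deft(\A)$ with $[E]+\gamma=\gamma$ in $\Deft(\A)$; picking $C\in\overline{\Def}(\A)$ with $[C]=\gamma$ and disjointifying, this yields disjoint subsets $E',C'$ of one power of $\A$ with $E'\ne\emptyset$ and a definable bijection $E'\sqcup C'\to C'$, so $C'\subsetneq E'\sqcup C'$ exhibits a failure of PHP. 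Conversely, given $D_1\subsetneq D_2$ with a definable bijection, set $E:=D_2\setminus D_1\ne\emptyset$; then $[D_1]+[E]=[D_2]=[D_1]$ in $\Deft(\A)$, and applying $\iota$ and cancelling in the ring $K_0(\A)$ gives $\iota([E])=0$.

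Granting the reformulation, the forward direction is immediate: if $x\in P$ and $-x\in P$, write $x=\iota([A])$, $-x=\iota([B])$; then $\iota([A'\sqcup B'])=\iota([A])+\iota([B])=0$ forces $A'\sqcup B'=\emptyset$, hence $A=\emptyset$ and $x=0$, so $(5)$ holds. For the backward direction, suppose $(K_0(\A),P)$ is a partially ordered ring with $[D]\in P$ for all $D$ and that PHP fails; by the reformulation there is a nonempty $E\in\overline{\Def}(\A)$ with $\iota([E])=0$. Fix $p\in E$ and set $E_0:=E\setminus\{p\}$; then $0=\iota([E])=1+\iota([E_0])$, so $-1=\iota([E_0])\in P$, while $1\in P$. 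Since $1\ne 0$ in $K_0(\A)$ --- a partially ordered ring being nontrivial, and the degenerate case $K_0(\A)=0$ itself forcing a failure of PHP through the reformulation with a singleton $E$ --- this contradicts $(5)$ at $x=1$, so PHP must hold.

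The point demanding the most care, and the principal technical obstacle, is the bookkeeping around formal versus honest disjoint unions: turning a semiring identity such as $[D_1]+[E]=[D_1]$ into an actual inclusion $D_1\subsetneq D_2$ of subsets of one fixed power of $\A$, and verifying that definable bijections behave well under this coding and under composition and inversion. Granting that, the rest is formal computation in $\Deft(\A)$ and its image under $\iota$.
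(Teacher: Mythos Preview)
The paper does not supply its own proof of this statement: Theorem~\ref{ts8.2} is quoted from \cite[Theorem~4.3]{scanlon} and is followed immediately by further exposition, with no proof environment. There is therefore nothing in the paper to compare your argument against.

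On the substance of your argument: the core reformulation---that PHP holds iff $\iota([E])\ne 0$ for every nonempty definable $E$---is correct and is exactly the leverage point one wants. Your forward direction is clean. In the backward direction your handling of the degenerate case $K_0(\A)=0$ is muddled: you assert that ``a partially ordered ring [is] nontrivial'', but the definition given in the paper does not force $1\ne 0$ (condition~(5) is vacuous in the zero ring). If $K_0(\A)=0$ and $\A$ is nonempty, then $(K_0(\A),\{0\})$ satisfies (1)--(5) and contains every $[D]$, yet PHP fails; your attempted escape (``the degenerate case \dots itself forcing a failure of PHP'') does not yield a contradiction, since you are already assuming PHP fails. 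This is likely an artifact of how the result is paraphrased here rather than a flaw in the underlying theorem, but as written your backward direction does not close in that case. Aside from this edge case, the argument is sound.
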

The principle PHP is very strong and is not satisfied by many structures. Kraji\v{c}ek \cite{krajicek} gave weaker version of pigeonhole principle called ontoPHP. 
\begin{definition}\label{ontophp}
A structure $\A$ is said to satisfy ontoPHP if for every definable set $D$ and a point $a\in D$, there does not exist a definable bijection between $D$ and $D\setminus \{a\}$. 
\end{definition}
There is an equivalent characterization of ontoPHP in terms of $K_0(\A)$.

\begin{proposition}\label{ontophp-th}\cite[Theorem~3.1]{krajicek}
A structure $\A$ satisfies ontoPHP iff $K_0(\A)\neq0$.
\end{proposition}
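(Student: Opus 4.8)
The claim is a straightforward unwinding of the two definitions, so the plan is to reduce both sides to one and the same condition on the additive monoid $(\Deft(\A),+)$. The pivotal step is a reformulation of ontoPHP: \emph{$\A$ fails ontoPHP if and only if there exists $\gamma\in\Deft(\A)$ with $\gamma+1=\gamma$}. For the ``only if'' direction, suppose $D\in\overline{\Def}(\A)$, $a\in D$, and $f\colon D\to D\setminus\{a\}$ is a definable bijection; since $D=(D\setminus\{a\})\cup\{a\}$ is a disjoint union we have $[D]=[D\setminus\{a\}]+1$ in $\Deft(\A)$, while $f$ witnesses $[D]=[D\setminus\{a\}]$, so $\gamma:=[D\setminus\{a\}]$ satisfies $\gamma+1=\gamma$. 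For the ``if'' direction, write $\gamma=[E]$; by the definition of addition the identity $[E]+1=[E]$ means there are a definable set $E'$ and a point $p\notin E'$ with $E'\in[E]$ (hence some definable bijection $E'\to E$) and $[E'\cup\{p\}]=[E]$ (hence some definable bijection $E\to E'\cup\{p\}$). Setting $D:=E'\cup\{p\}$ and $a:=p$, the composite definable bijection $E'\to E\to E'\cup\{p\}$ is a definable bijection from $D\setminus\{a\}=E'$ onto $D$, so ontoPHP fails.

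Granting the reformulation, the rest is immediate. First, $K_0(\A)$ is the zero ring if and only if $1=0$ in $K_0(\A)$. Second, by the construction of $K_0(\A)$ as the quotient of $\Deft(\A)\times\Deft(\A)$ by the relation $(\alpha,\beta)\sim(\alpha',\beta')\iff\exists\gamma\;(\alpha+\beta'+\gamma=\alpha'+\beta+\gamma)$, the classes of $(1,0)$ and $(0,0)$ coincide precisely when there exists $\gamma\in\Deft(\A)$ with $1+\gamma=\gamma$. Hence $K_0(\A)=0$ iff $\exists\gamma\in\Deft(\A)\,(1+\gamma=\gamma)$, which by the reformulation is exactly the failure of ontoPHP; taking contrapositives yields the proposition.

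I do not expect a genuine obstacle here, only careful bookkeeping. The one point that needs attention is the ``if'' direction of the reformulation lemma: one must convert the formal identity $[E]+1=[E]$ in $\Deft(\A)$ into an \emph{honest} definable set $D\subseteq\A^m$ together with a genuine element $a\in D$ and a definable bijection $D\to D\setminus\{a\}$. Since disjoint unions and the unit $1$ are specified only up to definable bijection, this forces one to fix concrete representatives and to check that the relevant definable bijections compose to the required one. It is also tacitly used that $\A$ is non-empty, so that $1=[\{a\}]$ is a well-defined element of $\Deft(\A)$.
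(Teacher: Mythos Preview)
Your argument is correct. The paper does not supply its own proof of this proposition; it merely records the statement with a citation to \cite[Theorem~3.1]{krajicek}, so there is nothing to compare against at the level of strategy. Your unwinding --- reducing both sides to the existence of $\gamma\in\Deft(\A)$ with $\gamma+1=\gamma$, and then reading off that equality from the Grothendieck-group construction --- is exactly the standard route and matches what one finds in Kraji\v{c}ek's paper. The care you take in the ``if'' direction (choosing concrete representatives $E'$ and $\{p\}$ and composing the two definable bijections) is the only place where anything could go wrong, and you handle it cleanly.
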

Now we state the counting of cardinalities principles.
\begin{definition}\label{CC}\cite[\S 4]{scanlon}
\begin{enumerate}
\item A structure $\A$ is said to satisfy the property \emph{CC1} if, given two definable sets $A$ and $B$, either there exists a definable injection of $A$ into $B$ or of $B$ into $A$.
\item A structure $\A$ is said to satisfy the property \emph{CC2} if, given two definable sets $A$ and $B$, either there exists a definable injection of $A$ into $B$ or a definable surjection of $B$ onto $A$.
\end{enumerate}
\end{definition}
Here we give a brief survey of structures whose Grothendieck (semi)rings have been computed. If $\A$ is a finite structure then $K_0(\A)\simeq\mathbb Z$. A theorem of Ax \cite{Ax} shows that $\mathbb C$ satisfies PHP but the exact structure of the Grothendieck ring $K_0(\mathbb{C})$ is not known. Third author showed \cite{kuber2} that a module $M_R$ over a ring $R$ satisfies ontoPHP and explicitly computed $K_0(M_R)$ as a quotient of a monoid ring exemplifying the `transfer principle' \cite[Question~8.2.3]{kuber}. Kraji\v{c}ek and Scanlon showed that the Grothendieck ring of a real closed field is isomorphic to the ring of integers. Cluckers and Haskell~\cite{cluckers},~\cite{cluc_hask} proved that both the field $\mathbb{Q}_p$ of $p$-adics, and the field $F_q((t))$ of formal Laurent series do not satisfy ontoPHP. Cluckers and Halupczok \cite{cluc_hal} computed the Grothendieck semiring of Presburger groups but showed that they do not satisfy ontoPHP.

Given a language $\LL$, the Greek capital letters $\Phi,\Psi,\Gamma,\hdots$ will denote $\LL$-formulas. Small roman letters $a,b,c,d, q$ will denote parameters. Roman letters $A,B,D,R$ will denote definable subsets of a structure, whereas letters $X, Y, Z$ will denote variables. Following the notation of \cite{hodges}, given an $\LL$-structure $\M$ and an $\LL$-formula $\Phi$ with $n$ variables and parameter set $\overline{a}$, by $\Phi(\M^n,\overline{a})$ we denote the subset of $M^n$ definable by $\Phi$. The set $\N$ of natural numbers always contains $0$.

\section{Definable subsets of a DLO without end points}\label{s8}

We work with a fixed DLO $\Q=(Q,<)$ without end points. We denote the theory of DLOs without end points by $ \T$. First we fix some notations.
\begin{itemize}
\item We denote the variable set $\{X_{n+1},X_{n+2},\ldots,X_{n+m}\}$ by $ \overline{X}[n+1:n+m]$ for $n,m\in \N$.\\[-3mm]
\item With $\overline{X}$ we refer to $\{X_{1},X_{2},\ldots,X_{n}\}$ for some $n\in\N$ that is clear from the context. We will use $\overline{X}[1:n]$ instead of $\overline{X}$ if we want to emphasize on $n$.\\[-3mm]
\item Given a quantifier-free formula $\Phi(\overline{X})$ written in the disjunctive normal form (DNF) as $\Phi(\overline X):=\bigvee\limits_{\Gamma\in\C_\Phi}\Gamma$, where $\C_{\Phi}$ is a finite set of conjunctive clauses.\\[-3mm]
\item For ease of notation $(\alpha_1<\alpha_2) \wedge (\alpha_2<\alpha_3)$ will be written as $\alpha_1<\alpha_2<\alpha_3$, where, for each $1\leq i\leq 3,\ \alpha_i$  can be a variable or a parameter.
\item Following conventions of model theory the notation $\overline{a}$ will denote an $m$-tuple of elements of $Q$ as well as the set $\{a_1,a_2,\ldots,a_m\}$; the use will be clear from the context.
\item Given parameter sets $\overline{a}$ and $\overline{b}$ in $Q$, $\overline{a}\cup\overline{b}$ will be denoted by  $\overline{a}\overline{b}$. 

\end{itemize}

\subsection{Definable sets in dimension $n$}
Fix some $n\in\N$. Let $\Dphi$ denote the set of definable subsets of $\Q^n$ that are definable by formulas without parameters. Clearly $\Dphi$ forms a finite boolean algebra under usual set operations. Given $D\in\Dphi$ such that $D=\Phi(\Q^n)$, it is possible to convert $\Phi(\overline{X})$ into a DNF with conjunctive clauses containing only positive atomic formulas for our structure is a linear order. Specifically, if $\alpha,\beta$ are variables (or parameters), then $\neg(\alpha<\beta)$ is equivalent to $(\beta<\alpha)\vee(\alpha=\beta)$ modulo theory $\T$. We now identify a subclass of $\Dphi$, the elements of which will be called `related sets', and show that they are the atoms of the boolean algebra $\Dphi$.

\begin{definition}\label{rset}
Call a definable set $D\in\Dphi$ to be a \emph{related set} if there exists a formula $\Phi(\overline{X})$ and a total ordering $\prec$ on $\{1,2,\hdots,n\}$ such that for each consecutive pair of indices $i\prec j$ precisely one of $X_i< X_j$ or $X_i=X_j$ holds.
\end{definition}
The related sets will correspond to the formulas of the form
\begin{equation}\label{e1}
    \overline{X_1}< \overline{X_2}< \ldots< \overline{X_{k-1}} < \overline{X_k}
\end{equation}
for some $k\in\N$, where $\overline{X}_{i}$ refers to a tuple, $(X_{i_1},X_{i_2},\cdots,X_{i_{l_i}})$ satisfying
$$X_{i_1}=X_{i_2}=\cdots=X_{i_{l_i}},$$
where $X_{i_t}\in \overline{X}$ for each $1\leq t\leq l_i$ and $\sum_{i=1}^kl_i=n$. For a related set $R$ let $\Phi_R(\overline{X})$ \emph{denote the formula associated to} $R$ which has a form as in Equation \eqref{e1}.
\begin{definition}\label{d2}
For a related set $R\in\Dphi$ define its \emph{height} to be the number $k$ in Equation \eqref{e1}, which is essentially the number of $\overline{X_i}$ in $\Phi_R(\overline{X})$. Denote the height of $R$ by $H(R)$.
\end{definition}
\begin{proposition}\label{p1}
The related sets in $\Dphi$ form the atoms for the boolean algebra $\Dphi$.
\end{proposition}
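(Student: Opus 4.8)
The plan is to show that the related sets are pairwise disjoint, cover $\Q^n$, and that no related set can be properly decomposed inside $\Dphi$; together these give that they are exactly the atoms. First I would observe that a related set $R$ is determined by the data of an ordered set partition of $\{1,\dots,n\}$: the blocks are the index tuples $\overline{X_i}$ (on which the coordinates are forced equal), and the linear order on the blocks is the order in which they appear in $\Phi_R$ as in Equation \eqref{e1}. Conversely every ordered set partition of $\{1,\dots,n\}$ yields a related set, so related sets are in bijection with ordered set partitions of $\{1,\dots,n\}$.

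Next I would prove \emph{disjointness}: if $R\ne R'$ are related sets with associated ordered set partitions $\sigma\ne\sigma'$, then a tuple $\overline{q}\in Q^n$ cannot satisfy both $\Phi_R$ and $\Phi_{R'}$. Indeed, a tuple $\overline q$ satisfying a related set's formula determines, via the relations $q_i<q_j$, $q_i=q_j$, $q_i>q_j$ among its coordinates, a unique ordered set partition; if it satisfied both $\Phi_R$ and $\Phi_{R'}$ this partition would equal both $\sigma$ and $\sigma'$, forcing $\sigma=\sigma'$. Then I would prove \emph{covering}: given any $\overline q\in Q^n$, reading off the order relations among its coordinates produces an ordered set partition of $\{1,\dots,n\}$, hence $\overline q$ lies in the corresponding related set; so $\bigvee_R \Phi_R$ holds on all of $\Q^n$, i.e. $\bigsqcup_R R=\Q^n$.

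Finally I would check the \emph{atom} property: a nonzero $D\in\Dphi$ with $D\subseteq R$ must equal $R$. Since any $\overline q\in R$ has \emph{exactly} the coordinate (in)equalities prescribed by $\Phi_R$ and no others, the subgroup of permutations of $Q$ fixing the set $R$ setwise — more precisely, the automorphisms of $\Q$ (order-automorphisms) act transitively on $R$: given $\overline q,\overline q'\in R$ with the same ordered set partition, density and the absence of end points let one build an order-automorphism of $\Q$ sending $\overline q$ to $\overline q'$ (extend the finite partial isomorphism between the underlying finite ordered sets using the back-and-forth property of $\T$). Any parameter-free definable set is invariant under $\mathrm{Aut}(\Q)$, so a nonempty parameter-free $D\subseteq R$ must contain the whole orbit, i.e. all of $R$; hence $D=R$ and $R$ is an atom. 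Since the $R$'s are disjoint and cover $\Q^n$, every atom of $\Dphi$ arises this way, completing the proof.

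The main obstacle I anticipate is the atomicity step, specifically justifying that $\mathrm{Aut}(\Q)$ acts transitively on each related set: this is where one genuinely uses that $\Q$ is a DLO without end points (homogeneity / back-and-forth), as opposed to merely a linear order, and one must be slightly careful that a parameter-free definable set is $\mathrm{Aut}(\Q)$-invariant and that quantifier elimination has already reduced every $D\in\Dphi$ to a quantifier-free (hence, by the discussion preceding the proposition, positive-DNF) form so that the ordered-set-partition bookkeeping is exhaustive. The disjointness and covering steps are essentially combinatorial bookkeeping and should be routine.
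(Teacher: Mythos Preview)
Your disjointness and covering arguments are fine, and the overall strategy is sound, but the atomicity step has a genuine gap. You argue that $\mathrm{Aut}(\Q)$ acts transitively on each related set $R$ by ``extending the finite partial isomorphism using the back-and-forth property of $\T$''. Back-and-forth is a property of the \emph{theory} (it yields that any two countable models are isomorphic and that $(\mathbb{Q},<)$ is ultrahomogeneous), not a guarantee that an \emph{arbitrary} model has automorphisms extending every finite partial isomorphism. The paper fixes an arbitrary DLO without end points, and such a $\Q$ need not be homogeneous: for instance, in $\Q=\mathbb{Q}+\mathbb{R}$ (a copy of $\mathbb{Q}$ placed below a copy of $\mathbb{R}$) there is no automorphism sending a point of the $\mathbb{Q}$-part to a point of the $\mathbb{R}$-part, since the initial segments they determine have different cardinalities. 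So transitivity of $\mathrm{Aut}(\Q)$ on $R$ can fail, and with it your conclusion.

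The fix is immediate and in fact simpler than the automorphism route: you have already observed that all tuples in $R$ share the same ordered set partition, i.e.\ the same quantifier-free $n$-type. By quantifier elimination for $\T$ (which you invoke anyway), any $D\in\Dphi$ is defined by a quantifier-free formula, so membership in $D$ depends only on that quantifier-free type; hence $D\cap R\neq\emptyset$ forces $R\subseteq D$. This is the semantic counterpart of the paper's argument, which instead works syntactically: it takes a positive-DNF defining formula for $D\subseteq R$, picks a satisfiable conjunctive clause $\Gamma$, and argues that every atomic conjunct of $\Gamma$ must already be implied by $\Phi_R$, so $\Gamma$ defines all of $R$. Both approaches ultimately rest on quantifier elimination; yours (once repaired) is a clean type-theoretic reformulation, while the paper's stays at the level of formulas.
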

\begin{proof}
We show that any two distinct related sets are mutually disjoint. Let $R_1\neq R_2\in\Dphi$. Then there exists a variable pair $(X_i,X_j)$ which differs in its relation in $R_1$ and $R_2$. WLOG assume that $X_i<X_j$ in $R_1$ and $X_j<X_i$ in $R_2$. (The proof is similar if $X_i=X_j$ holds for $R_2$.) If $\overline{a}\in R_1\cap R_2$, we cannot have $a_i< a_j$ and $a_j< a_i$. Therefore $R_1\cap R_2=\emptyset$.

Given a related set $R$ and a non-empty definable subset $D$ of $R$, we need to show that $D=R$. Let $\Phi(\overline{X})$ be a formula in DNF defining $D$ and let $\Gamma\in\C_\Phi$ define a non-empty set. Since $\Gamma$ defines a subset of $R$, we deduce that $\Gamma$ contains $\Phi_R(\overline{X})$ as a sub-formula. Suppose there is an atomic formula, say $X_i<X_j$, such that $\Phi_R(\overline{X})\wedge(X_i<X_j)$ is still a sub-formula of $\Gamma$. Since $R$ is a related set, precisely one of $X_i=X_j$ or $X_i>X_j$ is an atomic formula appearing in $\Phi_R(\overline{X})$. Hence $\Phi_R(\overline{X})\wedge(X_i<X_j)$ defines the empty set, which contradicts that $\Gamma$ defines a non-empty set. This establishes $D=R$.
\hfill
\end{proof}

By $At^{\emptyset}_n$ we denote the set of related sets in $\Dphi$. Since $\Dphi$ is an atomic boolean algebra, we extend the definition of height to any non-empty definable set in $\Dphi$ by
\begin{equation}
    H(D):=\max\{H(R)\ \big{|}\ R\in At^\emptyset_n,\ R\subseteq D\}.
\end{equation} 

We will now look at formulas definable with a fixed parameter set $\overline{a}$ of size $m$. Henceforth we will assume that the parameter set $\overline{a}$ is in  descending order unless stated otherwise. Let $\Dabar$ denote the set of all the definable sets which could be defined by formulas of $n$ variables and parameter set $\overline{a}$. As one would have guessed $\Dabar$ also forms a \emph{finite} boolean algebra. The following proposition and Definition \ref{d6} are aimed at exploring the structure of this boolean algebra.

Henceforth whenever $D$ is such that $D=\Phi(Q^n;\overline{a})$ for some parameter set $\overline{a}$, we will assume $\Phi$ to be a DNF with positive atomic formulas. 

\begin{proposition}\label{p2}
$\Dabar$ forms a finite boolean algebra with $$\{\Phi_A(\Q^n;\overline{a})\ne\emptyset\ |\ A\in At^{\emptyset}_{n+m}\}$$ as the set of atoms.
\end{proposition}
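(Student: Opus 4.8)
The plan is to exhibit $\Dabar$ as the image of the finite boolean algebra $\mathcal{D}^{\emptyset}_{n+m}$ under the map that substitutes the parameters $\overline{a}$ for the last $m$ variables, and then to read off the boolean-algebra structure and the list of atoms from standard facts about quotients of finite boolean algebras, feeding in Proposition~\ref{p1}. Concretely, for a parameter-free definable set $D'\subseteq Q^{n+m}$ define $\rho(D'):=\{\overline{x}\in Q^n\mid(\overline{x},a_1,\dots,a_m)\in D'\}$, the section of $D'$ over $\overline{a}$. Taking sections commutes with finite unions, finite intersections, and complementation (inside $Q^{n+m}$, resp.\ inside $Q^n$), and sends $\emptyset\mapsto\emptyset$ and $Q^{n+m}\mapsto Q^n$; hence $\rho\colon\mathcal{D}^{\emptyset}_{n+m}\to\mathcal{P}(Q^n)$ is a homomorphism of boolean algebras, and its image is automatically a boolean subalgebra of $\mathcal{P}(Q^n)$.

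Next I would verify that $\mathrm{im}(\rho)=\Dabar$. For one inclusion, if $D\in\Dabar$ is defined by a formula $\Psi(\overline{X};\overline{a})$, which by quantifier elimination for $\T$ we may assume is quantifier-free, then replacing every occurrence of $a_j$ in $\Psi$ by the variable $X_{n+j}$ yields a parameter-free quantifier-free formula $\Psi'$ in the variables $X_1,\dots,X_{n+m}$ with $\rho(\Psi'(\Q^{n+m}))=D$. For the reverse inclusion, if $D'=\Psi'(\Q^{n+m})$ with $\Psi'$ parameter-free, then $\rho(D')$ is defined by $\Psi'(\overline{X}[1:n],a_1,\dots,a_m)$, a formula in $n$ variables whose parameters lie among $\overline{a}$, so $\rho(D')\in\Dabar$. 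Therefore $\Dabar=\mathrm{im}(\rho)$ is the homomorphic image of a finite boolean algebra, hence a finite boolean algebra (isomorphic to $\mathcal{D}^{\emptyset}_{n+m}/\ker\rho$), closed under the usual set operations as asserted.

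Finally, for the atoms: by Proposition~\ref{p1} the atoms of $\mathcal{D}^{\emptyset}_{n+m}$ are exactly the related sets $A\in At^{\emptyset}_{n+m}$, and $\rho(A)=\Phi_A(\Q^n;\overline{a})$ by construction. The relevant general fact is that for a surjective homomorphism out of a finite (hence atomic) boolean algebra, the atoms of the target are precisely the nonzero images of atoms of the source. Indeed, any nonempty $D\in\Dabar$ equals $\rho(D')$ for some $D'$, and writing $D'=\bigsqcup\{A\in At^{\emptyset}_{n+m}\mid A\subseteq D'\}$ gives $D=\bigcup\rho(A)$, so $D$ contains some nonempty $\rho(A)$; hence no nonempty member of $\Dabar$ lies strictly below a nonempty $\Phi_A(\Q^n;\overline{a})$, which makes each such set an atom and shows these are all the atoms. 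Distinctness comes for free from Proposition~\ref{p1}: distinct related sets $A_1,A_2$ are disjoint, so $\rho(A_1)\cap\rho(A_2)=\emptyset$ and two nonempty sections cannot coincide. This gives precisely the claimed atom set $\{\Phi_A(\Q^n;\overline{a})\ne\emptyset\mid A\in At^{\emptyset}_{n+m}\}$.

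I do not expect a genuine obstacle here; the argument is bookkeeping. The two points that deserve a careful line are the identity $\mathrm{im}(\rho)=\Dabar$, where quantifier elimination for $\T$ is exactly what allows one to pass back and forth between ``formula in $n$ variables with parameters $\overline{a}$'' and ``parameter-free formula in $n+m$ variables'', and the precise statement that passing to a quotient of a finite boolean algebra simply deletes those atoms collapsed to $0$.
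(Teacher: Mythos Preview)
Your proposal is correct and follows essentially the same approach as the paper: both define the evaluation/section map $\mathcal{D}^{\emptyset}_{n+m}\to\Dabar$ (the paper calls it $\mathrm{eval}^{\overline{a}}_{n,m}$, you call it $\rho$), observe it is a surjective boolean algebra homomorphism, and conclude that the atoms downstairs are the nonempty images of the related sets upstairs via Proposition~\ref{p1}. Your write-up is more explicit about why surjectivity holds (invoking quantifier elimination to lift formulas with parameters to parameter-free ones) and about the general boolean-algebra fact governing atoms under quotients, whereas the paper leaves these points implicit; but there is no substantive difference in strategy.
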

\begin{proof}
For $A_1,A_2\in At^{\emptyset}_{n+m}$, the disjointness of $\Phi_{A_1}(\Q^n;\overline{a})$ and $\Phi_{A_2}(\Q^n;\overline{a})$ follows from the fact that $\Phi_{A_1}(\Q^{n+m})$ and $\Phi_{A_2}(\Q^{n+m})$ are disjoint. The evaluation map 
\begin{align*}
\mathrm{eval}^{\overline{a}}_{n,m}: \mathcal{D}^{\emptyset}_{n+m} &\longrightarrow\Dabar\\
\Phi_D(\Q^{n+m}) &\longmapsto \Phi_D(\Q^n;\overline{a}).
\end{align*}
is clearly a surjection. If $A$ is an atom below $D$ in the boolean algebra  $\mathcal{D}^{\emptyset}_{n+m}$, then the same holds true of their projections in $\Dabar$ under the evaluation map. For, if $\Phi_A(\Q^n;\overline{a})=\emptyset$, then its trivial. Otherwise if $\Q\models \Phi_A(\overline{b},\overline{a})$ for some $\overline{b}$ then $\Q\models \Phi_D(\overline{b},\overline{a})$ as $\Phi_A(\Q^{n+m})$ is below $\Phi_D(\Q^{n+m})$ which also implies that $\Phi_A(\Q^n;\overline{a})$ is below $\Phi_D(\Q^n;\overline{a})$. Hence the result.
\hfill
\end{proof}

We denote the set of atoms of $\Dabar$ by $\At$. We now define a map which will be important later and explore the structure of the elements of $\At$ by extending our definition of related sets. 
\begin{definition}[Decomposition into related sets]\label{d6}
Define the map
\begin{align*}
\Split:\Dabar &\longrightarrow \mathcal{P}(\At)\\
D &\longmapsto \{A\in \At\ |\ A\cap D\ne\emptyset\}.
\end{align*}
\end{definition}
This map gives us the atomic decomposition of a definable set $D \in\Dabar$. A related set in $\Dabar$ is a non-empty set defined by a formula of the form stated in Equation \eqref{e1} in $n+m$ variables where the final $m$ variables are replaced by the parameters $\overline{a}$. In view of the proposition above, related sets are precisely the atoms in $\Dabar$--the reason we are interested in related sets is because of the form of the formulas used to define them, namely 
\begin{equation}\label{e3}
\Phi_R(X):=\overline{X}_{p_1}<\overline{X}_{p_2}<\ldots< \overline{X}_{p_{k-1}}<\overline{X}_{p_k},
\end{equation}
where $p_i\in\{a_i,e_i\}$ and $\overline{X}_{a_i}$ refers to a tuple, $(X_{i_1},X_{i_2},\cdots,X_{i_{l_i}})$  corresponding to sub-formulas of the form
$$a_i=X_{i_1}=X_{i_2}=X_{i_3}=\cdots=X_{l_i},$$
where $l_i\in\N$ and $X_{i_j}\in \overline{X}$
and $\overline{X}_{e_i}$ refers to a tuple $(X_{i_1},X_{i_2},\cdots,X_{i_{l_i}})$  corresponding to sub-formulas of the form 
$$X_{i_1}=X_{i_2}=X_{i_3}=\cdots=X_{l_i}$$
where $l_i\in\N_{+}$ and $X_{i_j}\in \overline{X}$. For a related set $R$ in $\Dabar$, if the parameter set is clear from the context, by an abuse of notation we will continue to denote the standard formula defining $R$ as in Equation \eqref{e3} by $\Phi_R(\overline{X})$.

The following couple of definitions introduce terminology in order to simplify some proofs later. 
\begin{definition}[Components of a related set]\label{d3}
Define the set of \emph{components} of a related set $R\in At_n^{\overline{a}}$ as 
$$\mathrm{Comp}(R):=\{\overline{X}_{p_i}\ |\ \overline{X}_{p_i}\mbox{ appears in }\Phi_R(\overline{X})\mbox{ as in Equation \eqref{e3}}\}.$$
\end{definition}
Extending Definition \ref{d2} we define the \emph{height} of a related set $R\in At_n^{\overline{a}}$ as
$$H(R):=\mathrm{card}\bigg\{\overline{X}_{p_i}\in \mathrm{Comp}(R)\ \big{|}\ p_i=e_i\bigg\}.$$
One can induce an order relation between the $\overline{X}_{p_i}$s from the order relation on $\overline{X}[1:n]$ appearing in a related set naturally. Further extend the definition of height to $\Dabar$ by defining
\begin{equation}\label{e7}
  H(D):=\max \{H(R)\ |\ R\in \Split(D)\}  
\end{equation}
for any $D\in \Dabar$.
\begin{definition}\label{d4}
Assume $\bar{a}$ is sorted in descending order. Let $A\in\At$. Then for $a_i \in \bar{a}$ define 
$$\#(\overline{X}_{a_i},\overline{X}_{a_{i+1}}):=\mathrm{card}\bigg(\bigg\{\overline{X}_{e_k}\in \mathrm{Comp}(A)\ \big{|}\ \overline{X}_{a_i}<\overline{X}_{e_k}<\overline{X}_{a_{i+1}}\bigg\}\bigg).$$ 
\end{definition}
\begin{definition}[Equivalence up to permutation]\label{d5}
We say that $A_1,A_2\in At_n^{\overline{a}}$ are \emph{equivalent up to permutation}, denoted as $A_1\equiv A_2$, if the following conditions are satisfied:
\begin{enumerate}[leftmargin=*]
\item For  each $1\leq i\leq n-1$,
$\#(\overline{X}_{a_i},\overline{X}_{a_{i+1}})= \#(\overline{Y}_{a_i},\overline{Y}_{a_{i+1}}),$
\item $\mathrm{card}(\{\overline{X}_{e_k}\in \mathrm{Comp}(A_1)\ \big{|}\ \overline{X}_{e_k}<\overline{X}_{a_n}\})= \mathrm{card}(\{\overline{Y}_{e_k}\in \mathrm{Comp}(A_2)\ \big{|}\ \overline{Y}_{e_k}<\overline{Y}_{a_n}\}),$
\item $\mathrm{card}(\{\overline{X}_{e_k}\in \mathrm{Comp}(A_1)\ \big{|}\ \overline{X}_{a_1}<\overline{X}_{e_k}\})= \mathrm{card}(\{\overline{Y}_{e_k}\in \mathrm{Comp}(A_2)\ \big{|}\ \overline{Y}_{a_1}<\overline{Y}_{e_k}\}).$
\end{enumerate}
Where $A_1$ and $A_2$ are represented by disjoint sets of variables $\overline{X}$ and $\overline{Y}$ respectively.
\end{definition}

This definition directly implies whenever $A_1\equiv A_2$, then we have $H(A_1)=H(A_2)$ and $\mathrm{card}(\mathrm{Comp}(A_1))=\mathrm{card} (\mathrm{Comp}(A_2))$.

Now we have finished setting up the necessary background for the next section where we determine which sets are in definable bijection. We finish this section with a result that is the first step in this direction.
\begin{proposition}\label{p3}
If $A_1\equiv A_2$ in $\At$, then there is a definable bijection between $A_1$ and $A_2$. 
\end{proposition}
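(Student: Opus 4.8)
The plan is to build the bijection by hand as a map that shuffles the roles of the components of $A_1$ and $A_2$. Write the standard formulas $\Phi_{A_1}(\overline{X})$ and $\Phi_{A_2}(\overline{Y})$ as in Equation~\eqref{e3}, using disjoint variable tuples. Call a component \emph{free} if it is of the form $\overline{X}_{e_i}$ and a \emph{parameter component} if it is of the form $\overline{X}_{a_i}$ (possibly carrying no variables), and note that every coordinate $1,\dots,n$ lies in exactly one component. List the free components of $A_1$ in increasing order of value as $C_1\prec\cdots\prec C_h$ and those of $A_2$ as $D_1\prec\cdots\prec D_h$, where $h=H(A_1)=H(A_2)$ since $A_1\equiv A_2$. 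The three conditions of Definition~\ref{d5} amount to saying that, for each region cut out by the parameters --- below the smallest parameter, strictly between a consecutive pair of parameters, or above the largest parameter --- the number of free components of $A_1$ lying in it equals the number for $A_2$. As the free components of either set are enumerated in the order in which these regions are traversed, it follows that $C_j$ and $D_j$ lie in the \emph{same} region for every $j$.

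Now define $\Psi\colon A_1\to A_2$ as follows. A tuple $\overline{c}\in A_1$ is constant on each component of $A_1$: on the parameter component of $a_i$ it takes the value $a_i$, and on the free component $C_j$ it takes some value $v_j(\overline{c})$. Let $\Psi(\overline{c})$ be the tuple $\overline{d}$ with $d_r=a_i$ whenever coordinate $r$ lies in the parameter component of $a_i$ in $\Phi_{A_2}$, and $d_r=v_j(\overline{c})$ whenever $r$ lies in $D_j$. To see $\overline{d}\in A_2$, observe that the parameters already sit in the positions prescribed by $\Phi_{A_2}$, and the only remaining constraints of $\Phi_{A_2}$ are that the values assigned to the free components inside a common region be strictly increasing in the listed order and lie in the open interval determined by the flanking parameters; but these values are precisely the $v_j(\overline{c})$, which already satisfy the corresponding inequalities inside $A_1$ because $C_j$ and $D_j$ share a region. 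Running the same recipe with the roles of $A_1$ and $A_2$ interchanged produces a map $A_2\to A_1$ that is visibly a two-sided inverse of $\Psi$, so $\Psi$ is a bijection.

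It remains to check that $\Psi$ is definable. Its graph in $\Q^{2n}$ is defined by the quantifier-free formula obtained as the conjunction of $\Phi_{A_1}(\overline{X})$, of $Y_r=a_i$ for each coordinate $r$ lying in the parameter component of $a_i$ in $\Phi_{A_2}$, and of $Y_r=X_{s_j}$ for each $j$ and each $r\in D_j$, where $s_j$ is any fixed coordinate lying in $C_j$. This is a formula with parameters from $\overline{a}$, so $\Psi$ is a definable bijection, as required.

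I do not expect a genuine obstacle here: the content is entirely bookkeeping, namely extracting the statement ``$C_j$ and $D_j$ occupy the same region for all $j$'' from the three numerical equalities of Definition~\ref{d5}, and keeping track of which coordinate belongs to which component under the correspondence. Once the regions are matched, the order constraints needed for $\Psi$ to land in $A_2$ hold automatically, and definability is immediate since only equalities among variables and parameters occur.
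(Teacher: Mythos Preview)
Your argument is correct and follows essentially the same approach as the paper: match the components of $A_1$ and $A_2$ in order and define the graph of the bijection by equating corresponding components, then read off that the resulting formula defines a bijection with parameters in $\overline a$. The paper compresses this into the single formula $\Psi(\overline X,\overline Y):=\Phi_{A_1}(\overline X)\wedge\Phi_{A_2}(\overline Y)\wedge\bigwedge_{\overline X_p\in\mathrm{Comp}(A_1)}\overline X_p=\overline Y_p$, leaving implicit the component-matching that you spell out via the region analysis; your version is more explicit but the content is the same.
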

\begin{proof}
Suppose $A_1,A_2\in\At$  be equivalent up to permutation. We construct a bijection between $A_1$ and $A_2$. Renaming the variables $\overline{X}[n+1:2n]$ by $\overline{Y}[1:n]$ just for the ease of notation. The formula
$$\Psi(\overline{X},\overline{Y}):=\Phi_{A_1}(\overline{X})\wedge \Phi_{A_2}(\overline{Y})\bigwedge\limits_{\substack{\overline{X}_{p}\in \mathrm{Comp}(A_1)}}\overline{X}_{p}=\overline{Y}_{p}$$
is clearly a well-formed formula because $A_1\equiv A_2$. To see that it is the graph of a bijection we evaluate $\Psi(\overline{X},\overline{Y})$ at $\overline{b}\in A_1$ to obtain $\overline{Y}_p=\overline{b}_p$ for each $\overline{Y}_p\in \mathrm{Comp}(A_2)$. This implies each $Y_i=b_{j_i}$ for some $b_{j_i}\in \overline{b}$. Clearly the tuple $(Y_i)_{i=1}^n=(b_{j_i})_{i=1}^n\in A_2$. One can easily extract the bijection $g:A_1\rightarrow A_2$ out of $\Psi(\overline{X},\overline{Y}).$
\hfill
\end{proof}
\section{Computation of $\Deft(\Q)$} \label{s9}
\subsection{Local and global characteristics}
In the previous section we defined a relation $\equiv$ on $\At$ (Definition \ref{d5}). Further Proposition \ref{p3} implies that $\equiv$ is also an equivalence relation. Since our aim is to construct $\Deft(\Q)$ we look at the following quotient $$\Col:=\At/\equiv.$$ 
For $A\in \At$ we denote its equivalence class in $\Col$ by $[A]$. Define, for each $T\in\Col$,  the map 
\begin{align*}
    \LC:\Dabar &\longrightarrow\N\\
    D &\longmapsto \mathrm{card}\big(\{B\in \Split(D)\ |\ B\equiv A\}\big).
\end{align*}
This definition gives rise to the following map
\begin{align*}
    \GC : \Dabar &\longrightarrow \bigoplus_{T\in\Col} \N \\
    D &\longmapsto (\chi_T(D))_{T \in\Col}.
\end{align*}
For $D\in\Dabar$,  $T\in\Col$ we call $\LC(D)$ to be the \emph{local characteristic} of $D$ at $T$ and $\GC(D)$ to be the \emph{global characteristic} of $D$. 
If $D_1, D_2\in\Dabar$  are disjoint and $T\in \Col$, then $\LC(D_1)+\LC(D_2)=\LC(D_1\sqcup D_2)$ and hence 
\begin{equation}\label{e4}
\GC(D_1)+\GC(D_2)=\GC(D_1\sqcup D_2).
\end{equation} 

Proposition \ref{p3} can be extended to $\Dabar$ as follows. Given $D_1, D_2\in\Dabar$, if $\GC (D_1)=\GC(D_2)$, then there is a definable bijection between $D_1$ and $D_2$.

Figure \ref{f1} shows the related sets in $At_2^{ba}$ along with their equivalence classes in $\mathrm{Color}(2,ba)$.

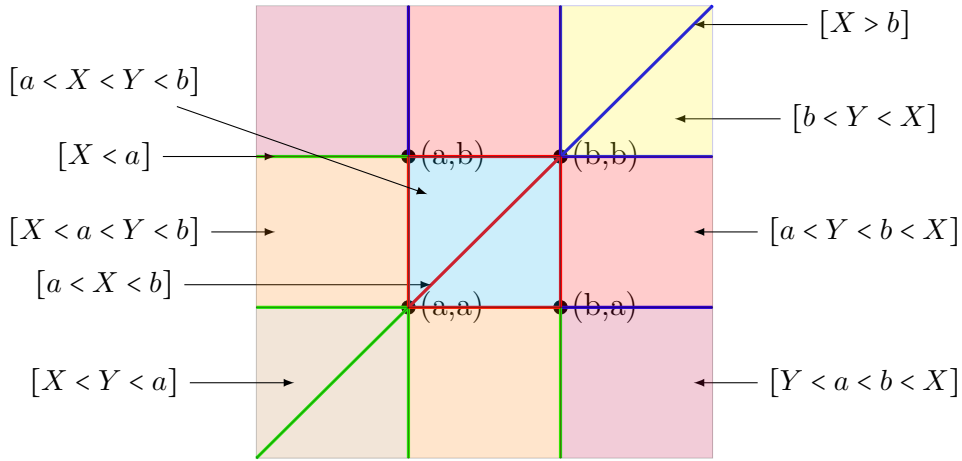
\begin{figure}[!b]
\begin{tikzpicture}
\draw (0,-1)--(6,-1);
\filldraw[black] (4,1) circle (2.5pt) node[anchor=west] {(b,b)};
\draw (0,1)--(6,1);
\filldraw[black] (2,-1) circle (2.5pt) node[anchor=west] {(a,a)};
\draw (0,-3)--(6,3);
\filldraw[black] (2,1) circle (2.5pt) node[anchor=west] {(a,b)};
\draw (2,-3)--(2,3);
\filldraw[black] (4,-1) circle (2.5pt) node[anchor=west] {(b,a)};
\draw (4,-3)--(4,3);

\draw[blue,very thick] (4,1)--(6,3);
\node (blue) at (8,2.75) {\small{$[X>b]$}};
\draw[-latex,black] (blue) to (5.75,2.75);
\draw[blue,very thick] (4,1)--(6,1);
\draw[blue,very thick] (4,1)--(4,3);
\draw[blue,very thick] (4,-1)--(6,-1);
\draw[blue,very thick] (2,1)--(2,3);
\draw[green,very thick] (2,1)--(0,1);
\node (green) at (-2,1) {\small{$[X<a]$}};
\draw[-latex,black] (green) to (.25,1);
\draw[green,very thick] (2,-1)--(0,-1);
\draw[green,very thick] (2,-1)--(0,-3);
\draw[green,very thick] (2,-1)--(2,-3);
\draw[green,very thick] (4,-1)--(4,-3);
\draw[red,very thick] (2,1)--(4,1);
\draw[red,very thick] (4,-1)--(4,1);
\draw[red,very thick] (2,-1)--(4,1);
\node (red) at (-2,-.7) {\small{$[a<X<b]$}};
\draw[-latex,black] (red) to (2.3,-.7);
\draw[red,very thick] (2,-1)--(4,-1);
\draw[red,very thick] (2,-1)--(2,1);
\draw[draw=blue, fill=yellow, opacity=.2] (4,1)--(6,3)--(4,3) -- cycle;
\node (yellow) at (8,1.5) {\small{$[b<Y<X]$}};
\draw[-latex,black] (yellow) to (5.5,1.5); 
\draw[draw=blue, fill=yellow, opacity=.2] (4,1)--(6,1)--(6,3) -- cycle;
\draw[draw=black, fill=red, opacity=.2] (4,-1) rectangle (6,1);
\node (red) at (8,0) {\small{$[a<Y<b<X]$}};
\draw[-latex,black] (red) to (5.75,0);
\draw[draw=black, fill=red, opacity=.2] (4,1) rectangle (2,3);
\node (purple) at (8,-2) {\small{$[Y<a<b<X]$}};
\draw[-latex,black] (purple) to (5.75,-2); 
\draw[draw=black, fill=purple, opacity=.2] (4,-1) rectangle (6,-3);
\draw[draw=black, fill=purple, opacity=.2] (2,1) rectangle (0,3);
\draw[draw=black, fill=orange, opacity=.2] (2,-1) rectangle (4,-3);
\node (orange) at (-2,0) {\small{$[X<a<Y<b]$}};
\draw[-latex,black] (orange) to (.25,0); 
\draw[draw=black, fill=orange, opacity=.2] (2,-1) rectangle (0,1);
\draw[draw=black, fill=brown, opacity=.2] (2,-1)--(0,-3)--(0,-1)--cycle;
\draw[draw=black, fill=brown, opacity=.2] (2,-1)--(2,-3)--(0,-3)--cycle;
\node (brown) at (-2,-2) {\small{$[X<Y<a]$}} ;
\draw[-latex,black] (brown) to (.5,-2); 
\draw[draw=black, fill=cyan, opacity=.2] (2,-1) rectangle (4,1);
\node (cyan) at (-2,2) {\small{$[a<X<Y<b]$}};
\draw[-latex,black] (cyan) to (2.25,0.5); 
\end{tikzpicture}
\caption{Related sets of $D_2^{ba}$ with their equivalence class in $\mathrm{Color}(2,ba)$ (same colored sets belong to same equivalence class).} 
\label{f1} 
\end{figure}

\begin{remark}\label{rmk2}
Given parameter sets $\overline{a}\subseteq\overline{b}$ we have $\Dabar\hookrightarrow\Dbbar$. To see this given a definable subset $D\in\Dabar$, $D$ can also be viewed as an element in $\Dbbar$ with
$$\mathrm{Split}^{\overline{b}}_n(D)=\{R_1\cap R_2\ \big{|}\ R_1\cap R_2\neq\emptyset,\ R_1\in At_n^{\overline{b}},\ R_2\in\Split(D)\}.$$
\end{remark}

The following proposition extends Proposition \ref{p3} in order to identify some condition under which a set in $\Dabar$ is in definable bijection with a set in $\Dbbar$.

\begin{proposition}\label{p4}
Given $D_1\in\Dbbar$ and $D_2\in \Dabar$ if there exists a parameter set $\bar{c}$ such that $\chi_n^{\bar{a}\bar{b} \bar{c}}(D_1)=\chi_n^{\bar{a}\bar{b}\bar{c}}(D_2)$ then there is a definable bijection between $D_1$ and $D_2$.
\end{proposition}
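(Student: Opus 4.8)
The plan is to pass to a single finite boolean algebra containing both $D_1$ and $D_2$ and then to invoke the extension of Proposition~\ref{p3} to $\Dabar$ stated just after Equation~\eqref{e4}.

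First I would observe that $\overline a\subseteq\overline a\overline b\overline c$ and $\overline b\subseteq\overline a\overline b\overline c$, so by Remark~\ref{rmk2} both $D_2\in\Dabar$ and $D_1\in\Dbbar$ may be regarded as elements of the finite boolean algebra $\mathcal D^{\overline a\overline b\overline c}_n$; concretely, each related set occurring in the native atomic decomposition of $D_i$ gets partitioned into the related sets of $At_n^{\overline a\overline b\overline c}$ that it meets, and the union of all these finer pieces is the $\mathrm{Split}^{\overline a\overline b\overline c}_n$-decomposition of $D_i$. With this identification the hypothesis is exactly the statement $\chi_n^{\overline a\overline b\overline c}(D_1)=\chi_n^{\overline a\overline b\overline c}(D_2)$ computed inside $\mathcal D^{\overline a\overline b\overline c}_n$: for every colour $T\in\mathrm{Color}(n,\overline a\overline b\overline c)$ the two refined decompositions contain the same number of related sets of colour $T$.

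Next I would apply the extension of Proposition~\ref{p3}, taken now with the parameter set $\overline a\overline b\overline c$. Using the equality of global characteristics I would fix, for each colour $T$, a bijection between the $T$-coloured related sets appearing in $\mathrm{Split}^{\overline a\overline b\overline c}_n(D_1)$ and those appearing in $\mathrm{Split}^{\overline a\overline b\overline c}_n(D_2)$; Proposition~\ref{p3} then supplies a definable bijection between each matched pair of related sets; and the disjoint union of these finitely many bijections is a definable bijection $D_1\to D_2$, because the related sets partition $D_1$ and $D_2$ and a finite disjoint union of definable bijections between disjoint definable pieces is again definable.

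The step I expect to need the most care---though it is in the end just bookkeeping---is the identification used above: one must check that re-expressing $D_i$ inside $\mathcal D^{\overline a\overline b\overline c}_n$ by Remark~\ref{rmk2} really does compute the value $\chi_n^{\overline a\overline b\overline c}(D_i)$ that the hypothesis refers to, bearing in mind that both the atom set $At_n^{\overline a\overline b\overline c}$ and the colour set $\mathrm{Color}(n,\overline a\overline b\overline c)$ grow with the parameter set, so that a single related set of $\Dabar$ or $\Dbbar$ typically breaks into several related sets over $\overline a\overline b\overline c$, and it is the colours of those finer pieces---rather than those of the original sets---that have to be matched. Once this is settled the matching-and-gluing argument above finishes the proof.
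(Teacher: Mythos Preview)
Your proposal is correct and follows essentially the same approach as the paper: the paper's own proof consists of the single sentence ``The proof is easy and directly follows from Proposition~\ref{p3} and Remark~\ref{rmk2}'', and what you have written is precisely the unpacking of that sentence---use Remark~\ref{rmk2} to place both sets in $\mathcal D^{\overline a\overline b\overline c}_n$, then invoke (the extension of) Proposition~\ref{p3} colour by colour and glue. Your care about the bookkeeping in the last paragraph is appropriate but, as you suspect, routine.
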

\begin{proof}
The proof is easy and directly follows from Proposition \ref{p3} and Remark \ref{rmk2}.

\hfill
\end{proof}

Let $\Dnq:= \bigcup\limits_{\bar{a}\in\PP_{fin}(Q)} \Dabar$--the directed union is taken over all finite subsets of $Q$. Given a definable set $D\in\Dabar$, we abuse the notation by denoting the equivalence class of $D$ in the directed union $\Dnq$ also by $D$.   Motivated from the above proposition we give the following extension of Definition \ref{d5}.
\begin{definition} \label{defeq} Given $D_1$, $D_2 \in \Dnq$, 
we say $D_1 \equiv D_2$, if there exists a parameter set $\bar{a}$ such that $\GC(D_1)=\GC(D_2)$. \end{definition}

Clearly this is an equivalence relation because of the above proposition. Given $D \in \Dabar$, we will denote its equivalence class in  
$\Dnq/\equiv$ by $[D]$.

The following result is the converse to Proposition \ref{p4} and is the crucial step for identifying the set $\Deft(\Q)$.
\begin{lemma} \label{main-lemma}
If there exists a definable bijection $\Phi$ between definable sets $D_1 \in \Dabar$ and $D_2 \in \Dbbar$, then there exists a parameter set $\overline{c}$ such that  
\begin{align}\chi_{n}^{\overline{a} \overline{b} \overline{c}}(D_1) = \chi_{n}^{\overline{a} \overline{b} \overline{c}}(D_2)).
\end{align}
\end{lemma}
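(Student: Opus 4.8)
The plan is to induct on the ``complexity'' of the definable bijection $\Phi$, where the key insight is that a definable bijection between sets in $\Dabar$ and $\Dbbar$ is itself defined by a formula with some finite parameter set, say $\overline{d}$. Setting $\overline{c} := \overline{d}$ (augmented, if necessary, so that $D_1, D_2$ are both visible inside $\Dnq[\overline{a}\overline{b}\overline{c}]$ via Remark~\ref{rmk2}), the graph $G_\Phi := \Phi(\Q^{2n}; \overline{a}\overline{b}\overline{c})$ is a definable subset of $\Q^{2n}$ over the parameter set $\overline{e} := \overline{a}\overline{b}\overline{c}$, and hence decomposes canonically into related sets of $At^{\overline{e}}_{2n}$ via the map $\mathrm{Split}^{\overline{e}}_{2n}$. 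First I would project: each related set $R \in \mathrm{Split}^{\overline{e}}_{2n}(G_\Phi)$ lives inside $\Q^{2n} = \Q^n \times \Q^n$, and since $\Phi$ is (the graph of) a bijection, the two coordinate projections $\pi_1(R) \subseteq D_1$ and $\pi_2(R) \subseteq D_2$ are injective on $R$. The crucial structural observation about related sets is that both $\pi_1(R)$ and $\pi_2(R)$ are again related sets (the defining formula~\eqref{e3} for $R$, when we drop the second block of $n$ variables, still has the chain-of-equalities-and-inequalities shape, after we delete from the chain those components that only involve the discarded variables), and moreover the restricted projection $R \to \pi_i(R)$ is a \emph{bijection} of related sets — it is surjective by definition of $\mathrm{Split}$, and injective because $R$, being an atom, cannot meet two distinct fibres of a coordinate projection unless those fibres force incompatible order relations, which is exactly ruled out by $\Phi$ being single-valued.

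Next I would assemble a global bijection-by-pieces. Because $G_\Phi$ is the graph of a bijection $D_1 \to D_2$, the projection $\pi_1$ restricts to a bijection $G_\Phi \to D_1$ and $\pi_2$ to a bijection $G_\Phi \to D_2$; intersecting with atoms, $\{\pi_1(R) : R \in \mathrm{Split}^{\overline{e}}_{2n}(G_\Phi)\}$ is precisely $\mathrm{Split}^{\overline{e}}_n(D_1)$ and likewise for $D_2$, and these are partitions (the atoms of a boolean algebra are pairwise disjoint, Proposition~\ref{p2}). So $R \mapsto (\pi_1(R), \pi_2(R))$ gives a bijection between $\mathrm{Split}^{\overline{e}}_n(D_1)$ and $\mathrm{Split}^{\overline{e}}_n(D_2)$ under which corresponding related sets are in definable bijection (namely via $R$ itself). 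The remaining point — and this is where the real content sits — is that if two related sets $A_1 \in At^{\overline{e}}_n$ and $A_2 \in At^{\overline{e}}_n$ are in definable bijection then $A_1 \equiv A_2$, i.e. they lie in the same colour class; granting this, summing the equalities $\chi_T$ over the matched pieces using additivity~\eqref{e4} yields $\chi^{\overline{e}}_n(D_1) = \chi^{\overline{e}}_n(D_2)$, which is the claim.

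That last point is the main obstacle, and I expect it to be handled by analysing the projection $R \to \pi_1(R)$ more carefully rather than by abstract nonsense. The idea is that the bijection $R \to \pi_1(R) = A_1$ coming from a single atom $R$ of $G_\Phi$ is an order-compatible, parameter-preserving map: on $A_1$, the $j$-th coordinate $X_j$ is sent to some coordinate $Y_{\sigma(j)}$ (because inside a related set every variable is constrained by an equality to exactly one ``slot''), so $\Phi$ restricted to $A_1$ is, up to permutation of coordinates, forced; comparing the chain~\eqref{e3} for $A_1$ with that for $A_2$ and tracking where the parameters $a_i$, $b_i$ sit in each chain shows the counts $\#(\overline{X}_{a_i}, \overline{X}_{a_{i+1}})$ and the two boundary counts of Definition~\ref{d5} must agree, i.e. $A_1 \equiv A_2$. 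A subtlety to watch: a single related set $R$ of $G_\Phi$ need not project onto a full atom of $D_1$ — rather, $D_1$ is partitioned by these projections, so one should really argue at the level of the whole partition, and it may be cleanest to first refine so that $\mathrm{Split}^{\overline{e}}_n(D_1) = \{\pi_1(R)\}$ holds on the nose (using that $\pi_1|_{G_\Phi}$ is a bijection onto $D_1$ and atoms refine under pullback, Remark~\ref{rmk2}), and only then invoke the colour-preservation of each piece. I would also remark that the choice of $\overline{c}$ is not canonical — any parameter set containing the parameters of $\Phi$ together with $\overline{a}\overline{b}$ works — which is consistent with Definition~\ref{defeq} quantifying existentially over the parameter set.
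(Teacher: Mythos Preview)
Your overall architecture matches the paper's: take $\overline c$ to contain the parameters of $\Phi$, decompose the graph $G_\Phi$ into atoms $R\in At^{\overline a\overline b\overline c}_{2n}$, argue that each such $R$ induces a definable bijection between related sets $\pi_1(R)\subseteq D_1$ and $\pi_2(R)\subseteq D_2$ with $\pi_1(R)\equiv\pi_2(R)$, and finish by additivity \eqref{e4}. The gap is exactly at the step you yourself flag as ``the main obstacle''. Your justification---``the $j$-th coordinate $X_j$ is sent to some coordinate $Y_{\sigma(j)}$ because inside a related set every variable is constrained by an equality to exactly one slot''---is not valid as stated: in a related set a variable need not be \emph{equated} to any other variable or parameter; it may sit alone in its own free component $\overline X_{e_l}$. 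What must actually be shown is that no free component of $R$ lies entirely inside the second block $\overline X[n{+}1{:}2n]$ (and symmetrically for the first block), and this does not follow from ``every variable lies in one slot''. It follows instead from injectivity of the projection together with \emph{density} of $\Q$: if some $\overline X_{e_l}\in\mathrm{Comp}(R)$ meets only $\overline X[n{+}1{:}2n]$ and no parameter, then for any point of $R$ one can replace the coordinates belonging to that component by any other value strictly between the two neighbouring components and remain in $R$, contradicting single-valuedness of $\Phi$. This wiggling argument is precisely where the paper's proof spends its effort; once it is in place, every free component of $R$ meets both blocks, the counts $\#(\overline X_{a_i},\overline X_{a_{i+1}})$ for $\pi_1(R)$ and $\pi_2(R)$ agree on the nose, and $\pi_1(R)\equiv\pi_2(R)$ follows.

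As a side remark, your worry that $\pi_1(R)$ might fail to be a full atom of $\D^{\overline a\overline b\overline c}_n$ dissolves once the density argument is in hand: deleting the second-block variables from the chain \eqref{e3} for $R$ leaves a chain formula whose free components are exactly those of $R$ (each still nonempty by the above), so $\pi_1(R)$ is an atom; and distinct atoms $R\neq R'$ of $G_\Phi$ project to disjoint sets since $\pi_1|_{G_\Phi}$ is injective, so $\{\pi_1(R)\}$ really is $\mathrm{Split}^{\overline a\overline b\overline c}_n(D_1)$.
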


\begin{proof}
WLOG assume that $\Phi$ is given as the following DNF: $\bigvee\limits_{i=1}^m \Phi_{A_i}(\overline{X}[1:2n])$, where, for each $1\leq i\leq m$, $A_i\in \mathrm{Split}^{\overline{a}\overline{b}\overline{c}}_{2n}(Graph(\Phi))$ for some parameter set $\overline{c}$. Consider a particular $A_i$. Note that $A_i$ is also the graph of a definable bijection from some subset of $D_1$ to a subset of $D_2$. Given a $2n$-tuple $\overline{q} \in A_i $, suppose that there exists a $q_u\in \{q_{n+1},\ldots,q_{2n}\}$ such that the following conditions are satisfied:
\begin{enumerate}
    \item $q_j\ne q_{u}\  (\mbox{for each }1\leq j\leq n)$,
    \item $q_{u}\ne d\ \mbox{(for each }d\in\overline{a}\overline{b}\overline{c})$,
\end{enumerate} 
then in $\Phi_{A_i}(\overline{X})$ only following atomic formulas appear:
$$X_{u} > X_j\mbox{ or } X_{u} < X_j \mbox{ (for each }1\leq j\leq n).$$
Condition (2) implies that $X_{u}$  will appear in some $\overline{X}_{e_l}\in\mathrm{Comp}(A_i)$. Further condition (1) implies that $\overline{X}[1:n]\cap\overline{X}_{e_l}=\emptyset$.

Choose $\overline{X}_p, \overline{X}_{p'}\in\mathrm{Comp}(A_i)$ such that $\overline{X}_p<\overline{X}_{e_l}<\overline{X}_{p'}$ and for any $\overline{X}_{p''}\ne\overline{X}_{p},\overline{X}_{p'}\in \mathrm{Comp}(A_i)$, either $\overline{X}_{p''}<\overline{X}_{p}$ or $\overline{X}_{p'}<\overline{X}_{p''}$ holds.
If $\overline{X}_p\cap\overline{X}[1:2n]$ and $\overline{X}_{p'}\cap\overline{X}[1:2n]$ are both non empty and  let $1\leq k,k'\leq 2n$ be such that $X_{k}\in\overline{X}_p$ and $X_{k'}\in\overline{X}_{p'}$. Thus we have $$q_k<q_{u}<q_{k'}.$$ Density of $\Q$ implies that there exists $q''\ne q_{u}$ such that $q_k<q''<q_{k'}$. A similar density argument can be given for the case if either of $\overline{X}_{p'}$ (respectively $\overline{X}_{p}$) is $\overline{X}_d$ for some $d\in\overline{a}\overline{b}\overline{c}$, then we have $q_k<q_u<d$ (respectively $d<q_u<q_{k'}$). 

We now consider the $2n$-tuple $\overline{q'}=(q_1,q_2,\ldots,q_{u-1},q'',q_{u+1},\ldots,q_{2n})$.
Note that $\overline{q'}\in A_i$. Hence for a fixed $n$-tuple $(q_1,\ldots,q_n)\in D_1$ we have $(q_{n+1},\ldots,q_u,\ldots, q_{2n})$,  $(q_{n+1},\ldots,q'',\ldots,q_{2n})\in D_2$. This contradicts the fact that $A_i$ is the graph of a definable bijection. Hence $q_u$ satisfying both the conditions (1) and (2) must not exist. 

Since $A_i$ is the graph of a definable bijection therefore repeat the above procedure for $q_u\in \overline{X}[1:n]$ and taking $j$ in condition (1) as $n+1\leq j \leq 2n$.
Note that failure of one of the conditions (1) or (2) for each $X_j\in \overline{X}[1:2n]$ implies that for each $\overline{X}_{e_v}\in\mathrm{Comp}(A_i)$, $$\overline{X}_{e_v}\cap\overline{X}[1:n]\ne\emptyset \mbox{ and } \overline{X}_{e_v}\cap\overline{X}[n+1:2n]\ne\emptyset.$$ This implies $D_1\equiv D_2$, as elements in $D^{\overline{a}\overline{b}\overline{c}}_n$ (Remark \ref{rmk2} and Definition \ref{d5}). Therefore $ \chi_{n}^{\overline{a} \overline{b} \overline{c}}(D_1) = \chi_{n}^{\overline{a} \overline{b} \overline{c}}(D_2)).$
\hfill
\end{proof} 

\begin{corollary}\label{c1}
If $D_1,D_2\in \D_n^{\overline{a}}$ are such that $H(D_1)\ne H(D_2)$, then there does not exist a definable bijection between them.
\end{corollary}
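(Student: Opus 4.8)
The plan is to derive this as a near-immediate consequence of Lemma \ref{main-lemma} together with the fact, noted just after Definition \ref{d5}, that the relation $\equiv$ on atoms preserves height, hence the global characteristic $\GC$ "remembers" the height of a definable set. First I would argue the key auxiliary claim: if $D_1, D_2 \in \D_n^{\overline{a}}$ satisfy $\GC[\overline{c}\,'](D_1) = \GC[\overline{c}\,'](D_2)$ for some parameter set $\overline{c}\,' \supseteq \overline{a}$ (in the sense of Proposition \ref{p4} and Remark \ref{rmk2}), then $H(D_1) = H(D_2)$. To see this, recall that by Remark \ref{rmk2} passing from $\overline{a}$ to a larger parameter set $\overline{c}\,'$ replaces each related set $R \in \Split(D_i)$ by the nonempty intersections $R \cap R'$ with $R' \in At_n^{\overline{c}\,'}$; refining the standard formula \eqref{e3} in this way can only subdivide components by inserting new parameter-components $\overline{X}_{d}$, never by merging or deleting the variable-components $\overline{X}_{e_i}$, so the maximal height among atoms below $D_i$ is unchanged, i.e.\ $H(D_i)$ computed in $\D_n^{\overline{c}\,'}$ equals $H(D_i)$ computed in $\D_n^{\overline{a}}$. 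Since $\GC[\overline{c}\,'](D_1) = \GC[\overline{c}\,'](D_2)$ means $D_1$ and $D_2$ have the same multiset of $\equiv$-classes of atoms, and each such class has a well-defined height, the maxima agree, giving $H(D_1) = H(D_2)$.

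With the claim in hand the corollary follows by contraposition. Suppose there were a definable bijection between $D_1$ and $D_2$. Applying Lemma \ref{main-lemma} (with $\overline{b} = \overline{a}$, so both sets already lie in $\D_n^{\overline{a}}$) yields a parameter set $\overline{c}$ such that $\chi_n^{\overline{a}\overline{c}}(D_1) = \chi_n^{\overline{a}\overline{c}}(D_2)$. By the auxiliary claim applied with $\overline{c}\,' = \overline{a}\overline{c}$ we conclude $H(D_1) = H(D_2)$, contradicting the hypothesis $H(D_1) \ne H(D_2)$. Hence no definable bijection exists.

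I expect the only real content — and the step most in need of care — is the invariance of height under enlarging the parameter set, i.e.\ verifying that the refinement described in Remark \ref{rmk2} does not change $H(D_i)$; everything else is bookkeeping with the definitions. One should check in particular the edge cases of the refinement near the extreme components (the ones governing conditions (2) and (3) of Definition \ref{d5}), where a new parameter $d$ could in principle sit below $\overline{X}_{a_n}$ or above $\overline{X}_{a_1}$; but since such a $d$ only ever splits off a $\overline{X}_d$-component and leaves the $e$-type components intact, the count $H$ is still unaffected. Alternatively, if one prefers to avoid the explicit refinement analysis, the height invariance can instead be extracted directly from the structure of the argument in the proof of Lemma \ref{main-lemma}: there the conclusion $D_1 \equiv D_2$ as elements of $\D_n^{\overline{a}\overline{b}\overline{c}}$ is obtained precisely by matching up components of atoms, and the matching respects which components are of $e$-type, so it transports the height equality back along the inclusion $\D_n^{\overline{a}} \hookrightarrow \D_n^{\overline{a}\overline{b}\overline{c}}$.
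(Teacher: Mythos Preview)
Your proposal is correct and follows essentially the same approach as the paper, which simply asserts that the result ``directly follows from the above lemma'' (Lemma~\ref{main-lemma}); you have merely spelled out the implicit step that height is preserved when enlarging the parameter set and that equal global characteristics force equal heights. One small sharpening: in your refinement argument, a piece in which the new parameter $d$ coincides with an $e$-type component does lose one unit of height, so the phrase ``never \ldots\ deleting the variable-components'' is a slight overstatement for individual pieces --- but since some refined piece (e.g.\ one with $d$ in a gap) always retains the full height, your conclusion about the maximum is unaffected.
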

\begin{proof}
 The proof directly follows from the above lemma.
\hfill
\end{proof}
\subsection{Aggregation with respect to $n$}
Let $D\in\Dnq$ for some $n\geq 1$. Then there is some finite parameter set $\overline{a}$ such that $D=\Phi_D(Q^n;\overline{a})$. Given $n<m\in\N$, define the map $\Delta^Q_{n,m}$ as follows:
\begin{align*}
    \Delta_{n,m}^Q : \D_n^Q &\longrightarrow \D_{m}^Q \\
    D &\longmapsto D'
\end{align*}
where $D'\in\D_{m}^Q$ is defined by the formula 
\begin{equation}\label{e5}
   \Psi(\overline{X}[1:m]):=  \Phi_D(\overline{X}[1:n]) \bigwedge\limits_{i=n+1}^m (X_{i} = X_1). \end{equation}
In the case when $m=n$, define $\Delta_{n,n}^Q$ to be the identity map. 
\begin{remark}\label{rmk3}
Given $D\in \Dnq$, there is an obvious definable bijection between $D$ and $\Delta_{m,n}^Q(D)$.
\end{remark}
The map
$\Delta_{n,m}^{Q}$ naturally induces a map on the quotient $\Dnq/\equiv$ as follows:
\begin{align*}
\overline{\Delta_{n,m}^Q}: \D_n^{Q}/\equiv &\longrightarrow \D_m^{Q}/\equiv\\
[D_1] &\longmapsto [\Delta_{n,m}^Q(D_1)].    
\end{align*}
The above map is well defined. To see this, for some parameter set $\overline{a}$, let $D_1, D_2\in \Dabar$ (Remark \ref{rmk2} allows us to assume this)  be such that $[D_1]=[D_2]$. Hence $\GC(D_1)=\GC(D_2)$.  Equation \eqref{e5} implies that $$\chi_m^{\overline{a}}(\Delta_{n,m}^Q(D_1))=\chi_m^{\overline{a}}(\Delta_{n,m}^Q(D_2)).$$ Therefore $[\Delta_{n,m}^Q(D_1)]=[\Delta_{n,m}^Q(D_2)]$ in $\D_m^Q/\equiv$ (Proposition \ref{p4}).

One readily verifies that $(\D_n^{Q}/\equiv,\overline{\Delta_{n,m}^{Q}})$ forms a directed system. Denote the direct limit of $(\D_n^{Q}/\equiv,\overline{\Delta_{n,m}^{Q}})$ by
\begin{equation}
    \widetilde{\D}(\Q):= \bigcup\limits_{n\in(\N,\leq)}\Dnq/\equiv.
\end{equation}

The following theorem shows that $\Deft(\Q)=\widetilde{\D}(\Q)$ constructed in the above section.
\begin{theorem} \label{th:semiring}
Two definable sets are in definable bijection if and only if they have the same equivalence class in $\widetilde{\D}(\Q)$.
\end{theorem}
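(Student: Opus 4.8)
The plan is to establish the two directions separately, since the machinery developed in the section already does most of the work. The ``only if'' direction is essentially Lemma \ref{main-lemma}: given a definable bijection between $D_1 \in \Dabar$ and $D_2 \in \Dbbar$, the lemma produces a parameter set $\overline{c}$ with $\chi_n^{\overline{a}\overline{b}\overline{c}}(D_1) = \chi_n^{\overline{a}\overline{b}\overline{c}}(D_2)$, which by Definition \ref{defeq} means $D_1 \equiv D_2$ in $\Dnq$, hence $[D_1] = [D_2]$ in $\widetilde{\D}(\Q)$. The only subtlety is that $D_1$ and $D_2$ a priori live in $\Dnq$ for the \emph{same} $n$ (the number of free variables), whereas a ``definable set'' in full generality is a subset of $\Q^{n_1}$ and of $\Q^{n_2}$ with possibly $n_1 \ne n_2$. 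To handle this I would first reduce to equal dimension using $\Delta^Q_{n,m}$ together with Remark \ref{rmk3}: if $D_1 \subseteq \Q^{n_1}$, $D_2 \subseteq \Q^{n_2}$ with $n_1 < n_2$, then $\Delta^Q_{n_1,n_2}(D_1)$ is in definable bijection with $D_1$ (Remark \ref{rmk3}), composing this with the given bijection $D_1 \to D_2$ gives a definable bijection between two subsets of $\Q^{n_2}$, and in $\widetilde{\D}(\Q)$ we have $[D_1] = [\Delta^Q_{n_1,n_2}(D_1)]$ by construction of the direct limit. So WLOG $n_1 = n_2 = n$, and then Lemma \ref{main-lemma} applies.

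For the ``if'' direction, suppose $D_1$ and $D_2$ have the same class in $\widetilde{\D}(\Q)$. Unwinding the direct limit $\widetilde{\D}(\Q) = \bigcup_n \Dnq/\equiv$ along the maps $\overline{\Delta^Q_{n,m}}$: there is some common level $m$ such that the images of $[D_1]$ and $[D_2]$ in $\D^Q_m/\equiv$ coincide, i.e. $\overline{\Delta^Q_{n_1,m}}([D_1]) = \overline{\Delta^Q_{n_2,m}}([D_2])$, which unpacks to $[\Delta^Q_{n_1,m}(D_1)] = [\Delta^Q_{n_2,m}(D_2)]$ in $\D^Q_m/\equiv$. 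By Definition \ref{defeq} there is a parameter set $\overline{a}$ (enlarging the parameter sets of both via Remark \ref{rmk2}) with $\chi_m^{\overline{a}}(\Delta^Q_{n_1,m}(D_1)) = \chi_m^{\overline{a}}(\Delta^Q_{n_2,m}(D_2))$. Then the extension of Proposition \ref{p3} to $\Dabar$ recorded just before Figure \ref{f1} (equal global characteristic implies definable bijection), or equivalently Proposition \ref{p4}, yields a definable bijection between $\Delta^Q_{n_1,m}(D_1)$ and $\Delta^Q_{n_2,m}(D_2)$. Finally compose with the bijections $D_1 \to \Delta^Q_{n_1,m}(D_1)$ and $\Delta^Q_{n_2,m}(D_2) \to D_2$ supplied by Remark \ref{rmk3} to obtain a definable bijection $D_1 \to D_2$.

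The one point requiring genuine care — and the step I expect to be the main obstacle — is the bookkeeping of parameter sets across the two directions. In the ``only if'' direction Lemma \ref{main-lemma} spits out some parameter set $\overline{c}$ depending on the bijection, and in the ``if'' direction Definition \ref{defeq} only asserts \emph{existence} of some parameter set witnessing $\GC$-equality. One must check that Remark \ref{rmk2} (the embedding $\Dabar \hookrightarrow \Dbbar$ when $\overline{a} \subseteq \overline{b}$, with the explicit refinement formula for $\mathrm{Split}$) genuinely lets us pass to a common parameter set without changing any equivalence class, so that ``$\equiv$'' in $\Dnq$ is well-posed and compatible with the direct-limit transition maps; this is exactly the content of the well-definedness verification given for $\overline{\Delta^Q_{n,m}}$, and it should be cited or lightly re-run. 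Modulo this routine parameter-merging argument, the theorem is a formal consequence of Lemma \ref{main-lemma}, Proposition \ref{p4}, and Remark \ref{rmk3}, and the proof is short.
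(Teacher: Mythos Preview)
Your proposal is correct and follows essentially the same route as the paper: reduce to a common ambient dimension via $\Delta^Q_{n,m}$ and Remark \ref{rmk3}, then invoke Lemma \ref{main-lemma} for the forward direction and the direct-limit definition plus Proposition \ref{p4} for the converse. The paper's proof is terser---it does not pause over the parameter-set bookkeeping you flag in your third paragraph, treating that as already settled by the well-definedness of $\overline{\Delta^Q_{n,m}}$---but the logical skeleton is identical.
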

\begin{proof}
($\Rightarrow $): In the view of Remark \ref{rmk3} it suffices to assume that $D_1, D_2 \in \Dnq$ for some large enough $n$. If there exists a definable bijection between $D_1$ and $D_2$, then Lemma \ref{main-lemma} implies that there exists a parameter set $\overline{a}$ such that $$\GC(D_1)=\GC(D_2).$$ 
Hence in $\Dnq/\equiv$, we have $[D_1]=[D_2]$ and therefore the equivalence class of $D_1$ is equal to the equivalence class of $D_2$ in $\widetilde{\D}(\Q)$.

($\Leftarrow$): For $n
\leq m$, let $D_1 \in \D_n^{Q}$ and $D_2 \in \D_m^{Q}$. If  the equivalence classes of $D_1$ and $D_2$ are equal in $\widetilde{\D}(\Q)$, then there exists a $k\in\N$ with $n\leq m\leq k$ such that $$ [\Delta_{n,k}^Q(D_1)] = [\Delta_{m,k}^Q(D_2)]\ \ \  \mbox{(by definition of the direct limit).}$$ 
Hence $ \Delta_{n,k}^Q(D_1)$ and $\Delta_{m,k}^Q(D_2)$ are in definable bijection with each other. Finally using Remark \ref{rmk3} we conclude that $D_1$ and $D_2$ are in definable bijection with each other.
\hfill
\end{proof}

In view of this theorem, for $D\in\Dnq$, we will use the notation $[D]$ to denote its class in $\Deft(\Q)$.
\section{Computation of $K_0(\Q)$}\label{s10}
In the previous section we identified the set $\Deft(\Q)$ using Theorem \ref{th:semiring}. In this section we will compute the $K_0(\Q)$ as promised.
As in \S \ref{s7} we endow $\Deft(\Q)$ with the structure of a semiring with unity which turns out to be cancellative.

\begin{theorem}[Cancellativity] \label{cancellative}
The semiring $\Deft(\Q)$ is a cancellative semiring.
\end{theorem}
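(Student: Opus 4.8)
The plan is to use the global characteristic map to transport the cancellation problem from the abstract semiring $\Deft(\Q)$ into the concrete commutative monoid $\bigoplus_{T\in\Col}\N$, where cancellativity is automatic. First I would note that Theorem \ref{th:semiring} identifies $\Deft(\Q)$ with the direct limit $\widetilde{\D}(\Q)$, and that for a fixed dimension $n$ and a fixed finite parameter set $\overline{a}$, any element of $\Deft(\Q)$ has a representative $D\in\Dabar$ (enlarging $\overline{a}$ and $n$ as needed via Remark \ref{rmk2} and the maps $\Delta^Q_{n,m}$ from Remark \ref{rmk3}). Given $\alpha,\beta,\gamma\in\Deft(\Q)$ with $\alpha+\gamma=\beta+\gamma$, I would first pass to a common dimension $m$ (via the directed system $(\D_n^Q/\equiv,\overline{\Delta^Q_{n,m}})$) and then, crucially, to a \emph{common finite parameter set} $\overline{a}$ that witnesses representatives $A,B,C\in\Dabar$ of $\alpha,\beta,\gamma$ simultaneously; here one uses that the colour quotient $\Col$ and hence $\GC$ depend compatibly on the parameter set, so that enlarging $\overline{a}$ only refines the atomic decomposition (Remark \ref{rmk2}).

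Next I would invoke the additivity of the global characteristic: by Equation \eqref{e4}, $\GC$ is a monoid homomorphism from $(\Dabar/\equiv,+)$ into $(\bigoplus_{T\in\Col}\N,+)$, and by Proposition \ref{p4} (together with Definition \ref{defeq}) it is \emph{injective} on equivalence classes — two definable sets with the same global characteristic are in definable bijection. So $\alpha+\gamma=\beta+\gamma$ in $\Deft(\Q)$ forces $\GC(A)+\GC(C)=\GC(B)+\GC(C)$ in $\bigoplus_{T\in\Col}\N$. Since $\bigoplus_{T\in\Col}\N$ is cancellative (it is a direct sum of copies of the cancellative monoid $\N$, and cancellation can be checked coordinatewise), we conclude $\GC(A)=\GC(B)$, whence $A\equiv B$, i.e. $\alpha=\beta$ in $\Deft(\Q)$. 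One subtlety I would address carefully: addition in $\Deft(\Q)$ requires choosing \emph{disjoint} representatives $A',C'$ before forming $A'\sqcup C'$, so I would remark that the disjoint union can be realized inside a larger power $\Q^{m+1}$ (e.g. placing the two pieces on different values of an auxiliary coordinate, as in the construction of $\Delta^Q_{n,m}$), and that such a move does not change global characteristics up to the identification in $\widetilde{\D}(\Q)$.

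The main obstacle I anticipate is purely bookkeeping rather than conceptual: making precise the claim that a single finite parameter set $\overline{a}$ and a single dimension $m$ can be chosen to represent $\alpha$, $\beta$, and $\gamma$ \emph{and} to compute the sums $\alpha+\gamma$ and $\beta+\gamma$, so that the equality $\alpha+\gamma=\beta+\gamma$ in the direct limit $\widetilde{\D}(\Q)$ actually descends to an equality of global characteristics over $\overline{a}$ in dimension $m$ — this requires knowing that the transition maps $\overline{\Delta^Q_{n,m}}$ of the directed system preserve global characteristics in the appropriate sense (which follows from Equation \eqref{e5}) and that enlarging the parameter set is harmless (Remark \ref{rmk2}). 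Once the correct ambient $\Dabar$ is fixed, the argument is just: apply $\GC$, use injectivity (Proposition \ref{p4}) and additivity (Equation \eqref{e4}), cancel coordinatewise in $\bigoplus_{T\in\Col}\N$, and pull back. No genuinely hard estimate is needed.
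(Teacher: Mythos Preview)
Your proposal is correct and follows essentially the same route as the paper: reduce to a common ambient $\Dabar$, push the equality $[A\sqcup C]=[B\sqcup C]$ through the global characteristic into $\bigoplus_{T\in\Col}\N$ via additivity (Equation~\eqref{e4}), cancel coordinatewise in $\N$, and pull back using Proposition~\ref{p4}. The only cosmetic difference is that the paper invokes Lemma~\ref{main-lemma} directly to obtain the enlarged parameter set $\overline{a}\overline{c}$ over which the global characteristics of $A\sqcup C$ and $B\sqcup C$ agree, whereas you reach the same point by unpacking Theorem~\ref{th:semiring} and the direct-limit bookkeeping; since the forward direction of Theorem~\ref{th:semiring} is precisely Lemma~\ref{main-lemma}, the content is identical.
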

\begin{proof}
WLOG assume that $A$, $B$ and $C$ are pair-wise disjoint definable subsets, such that 
$$ [A] + [C]= [B] + [C]\ \mbox{in }\Deft(\Q).$$ 
By Theorem \ref{th:semiring} there exists a definable bijection between $A \sqcup C$ and $B \sqcup C$. Assume that $ A \sqcup C$ , $B \sqcup C \in \Dabar$ for some parameter set $\overline{a}$ (Remarks \ref{rmk2} and \ref{rmk3}). Now by Lemma \ref{main-lemma} there exists a parameter set $\overline{c}$ such that 
\begin{equation}\label{e6}
\chi_n^{\overline{a}\overline{c}} (A\sqcup C) = \chi_n^{\overline{a}\overline{c}} (B\sqcup C). 
\end{equation}
Therefore we have 
\begin{align*}
\chi_n^{\overline{a}\overline{c}} (A)+\chi_n^{\overline{a}\overline{c}} (C)&=\chi_n^{\overline{a}\overline{c}} (A\sqcup C)\ \hfill\mbox{(Equation \eqref{e4})}\\
&=\chi_n^{\overline{a}\overline{c}} (B\sqcup C)\ \hfill\mbox{(Equation \eqref{e6})}\\
&=\chi_n^{\overline{a}\overline{c}}
(B)+\chi_n^{\overline{a}\overline{c}}(C)\ \mbox{(Equation \eqref{e4}).}
\end{align*}
Hence we conclude that 
$$\chi_n^{\overline{a}\overline{c}}(A)=\chi_n^{\overline{a}\overline{c}}(B).$$
Therefore $B$ and $A$ are in definable bijection (Proposition \ref{p4}) which completes the proof.
\hfill
\end{proof}

We abuse the notation slightly by using $[D]$ for the equivalence class of $D$ in both $\Deft(\Q)$  and $K_0(\Q)$. The meaning of $[D]$ would be clear from the context. Further for a definable set $D'$ we say that $[D']$ is contained in $[D]$ if there is definable set $D'' \in [D]$ such that $D'\subseteq D''$. Also note that for $D_1$ and $D_2$ such that $[D_1]=[D_2]$ we have $H(D_1)=H(D_2)$ (Corollary \ref{c1} and Theorem \ref{th:semiring}). Hence we naturally extend the definition of height (Equation \eqref{e7}) to $\Deft(\Q)$.

\subsection{Multiplication of two related sets}\label{ss3.1}
It suffices to study the multiplication in the semiring $\Deft(\Q)$, as it is embedded inside $K_0(\Q)$. We will append a formal minimum element, denoted $-\infty$, to $\Q$ in order to simplify the notations in the proof and this has no other effect. 

Given a parameter set $\overline{a}\in Q\sqcup\{-\infty\}$ sorted in  descending order and $\overline{n}=(n_1,n_2,\ldots,n_k)$ be $k$-tuple of non negative integers. Let $^{\overline{a}}R_{\overline{n}}$ be the related set corresponding to the formula  
$$X_1>X_2>\cdots >X_{n_1}>a_1> \cdots> X_{n_1+n_2}>a_2>\cdots> X_{n_1+\ldots+n_k}>a_k$$
and $\big[^{\overline{a}}R_{\overline{n}}\big]\in \Deft(\Q)$ be its equivalence class. When $\overline{a}$ and $\overline{n}$ are tuples of length $1$, then we use the notation $ ^aR_n$ for simplicity. Note that for every parameter set $\overline{a}$, $[ ^{\overline{a}}R_0]$ denotes the equivalence class of singleton in $\Deft(\Q).$
The following proposition states key properties of multiplication of equivalence classes of related sets in $\Deft(\Q)$.
\begin{proposition}\label{p3.1}
The following statements hold in the semiring $\Deft(\Q)$:
\begin{enumerate}
    \item Let $a_1>a_2>a_3\in Q\sqcup\{-\infty\}$ , then $$\big[ ^{(a_1,a_2,a_3)}R_{(0,n,0)}\big]\cdot\big[ ^{(a_1,a_2,a_3)}R_{(0,0,m)}\big]=\big[ ^{(a_1,a_2,a_3)}R_{(0,n,m)}\big],$$ $$\big[ ^{(a_2,a_3)}R_{(n,0)}\big]\cdot\big[ ^{(a_2,a_3)}R_{(0,m)}\big]=\big[ ^{(a_2,a_3)}R_{(n,m)}\big].$$
    \item Let $m\leq n\in\N$ then \begin{equation}\label{e9}
    \big[ ^aR_m\big]\big[ ^aR_n\big]=\sum\limits_{i=0}^m \binom{n+i}{i}\binom{n}{m-i}\big[ ^aR_{n+i}\big].
    \end{equation}
    \item Given $\overline{a}\in Q\sqcup\{-\infty\}$, we have 
    $$[ ^{a_i}R_n]=\sum\limits_{R\in\Split( ^{a_i}R_n)}[R],$$ 
    where each $R$ has form $^{\overline{a}}R_{(m_1,m_2,\ldots,m_i,0,0,\ldots,0)}$ for some non-negetive integers $m_j$ such that $m_1+m_2+\ldots+m_i \leq n$. 
\end{enumerate}
\end{proposition}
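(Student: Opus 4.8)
The plan is to prove the three parts by directly analysing the atomic decompositions of the relevant products, using Proposition \ref{p3} (equivalence up to permutation yields definable bijections), Theorem \ref{th:semiring} (which lets us work with global characteristics), and the additivity formula \eqref{e4}. For part (1), I would observe that the product $\big[^{(a_1,a_2,a_3)}R_{(0,n,0)}\big]\cdot\big[^{(a_1,a_2,a_3)}R_{(0,0,m)}\big]$ is the class of a related set living in $\D^{\overline a}_{n+m}$: the first factor forces its $n$ variables to sit strictly between $a_1$ and $a_2$ while equalling each other (the standard form puts all of them in one component $\overline X_{e}$ with $a_1>\overline X_e>a_2$), and the second factor does the same between $a_2$ and $a_3$. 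Crucially these two blocks of variables cannot interleave with one another because the fixed parameter $a_2$ separates them, so the Cartesian product is again a single related set, namely $^{(a_1,a_2,a_3)}R_{(0,n,m)}$ up to permutation of variables. The second displayed identity in (1) is the same argument with the top parameter removed (the block below $a_2$ uses the variables $X_1>\cdots>X_n$, all equal, between $+\infty$ and $a_2$). So part (1) is essentially bookkeeping with Definition \ref{d5} and Proposition \ref{p3}.

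For part (3), which I would do before (2), the point is simply that $[^{a_i}R_n]$ is computed in $\D^{a_i}_n$, but once we enlarge the parameter set to $\overline a=(a_1,\dots,a_i)$ (Remark \ref{rmk2}), the set $^{a_i}R_n=\{X_1>X_2>\cdots>X_n>a_i\}$ splits into atoms of $\D^{\overline a}_n$. Each such atom is obtained by deciding, for the descending chain $X_1>\cdots>X_n$, where each of the (coalesced) blocks of variables falls relative to $a_1>a_2>\cdots>a_i$; since all $X_j$ exceed $a_i$, no block lands below $a_i$, so the atom has the form $^{\overline a}R_{(m_1,\dots,m_i,0,\dots,0)}$ with $\sum m_j\le n$ (the inequality, not equality, because some variables may coincide with the $a_j$'s). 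Then \eqref{e4} gives $[^{a_i}R_n]=\sum_{R\in\Split(^{a_i}R_n)}[R]$, which is exactly the claim. This is a direct application of Proposition \ref{p2} together with the definition of $\Split$.

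Part (2) is the substantive one and I expect it to be the main obstacle, since it requires an honest combinatorial count rather than a structural observation. Here $[^aR_m]\cdot[^aR_n]$ is the class of the product of $\{X_1>\cdots>X_m>a\}$ with $\{Y_1>\cdots>Y_n>a\}$, a definable subset of $\D^a_{m+n}$ whose atoms are the related sets in $m+n$ variables where the $X$-chain and the $Y$-chain are shuffled together (with possible coincidences among the variables), subject to all of them exceeding $a$. The task is to group these atoms by their equivalence class in $\Col$ — which for sets above $a$ is determined solely by the height, i.e. the number of distinct values among the combined variables — and count how many atoms have each height. If the combined configuration uses $n+i$ distinct values (note the height is always at least $n$, since the $Y$'s alone contribute $n$ distinct values, and at most $n+m$), then I would count the number of order-configurations: choose which $i$ of the $m$ distinct $X$-slots introduce genuinely new values not already occupied — this is where a $\binom{n}{m-i}$-type factor for placing the remaining $m-i$ equalities and a $\binom{n+i}{i}$-type factor for interleaving the $i$ new values among the $n+i$ slots should appear. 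I would verify the binomial identity by carefully setting up the bijection between atoms-of-given-height and pairs of combinatorial choices, then invoke Proposition \ref{p3} (via equality of global characteristics) and \eqref{e4} to conclude $[^aR_m][^aR_n]=\sum_{i=0}^m\binom{n+i}{i}\binom{n}{m-i}[^aR_{n+i}]$. The delicate points will be (a) correctly accounting for variables that coincide versus those that are strictly ordered, and (b) confirming that two atoms of the same height are genuinely $\equiv$-equivalent in $\Col$ (which holds because above $a$ there is only the single relevant statistic), so that the local characteristic $\chi_T$ really does just count atoms of each height.
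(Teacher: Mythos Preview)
Your proposal is essentially correct and matches the paper's approach: parts (1) and (3) are dismissed as immediate from the definitions, and part (2) is a combinatorial count of the atoms of the product grouped by height (the paper organises this same count by forgetting the order on the $m$-chain, counting over all $m!$ orderings, and dividing by $m!$ at the end, which simplifies to the same $\binom{n+i}{i}\binom{n}{m-i}$). One slip to fix: in (1) the variables of $^{(a_1,a_2,a_3)}R_{(0,n,0)}$ are strictly ordered $a_1>X_1>\cdots>X_n>a_2$, not ``equalling each other'' as you wrote---but your key observation that the parameter $a_2$ separates the two blocks, so that the Cartesian product is already a single related set, is correct and is all that part (1) needs.
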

\begin{proof}
(1) The proof follows from the definition of `.' operation on $\Deft(\Q)$.

(2) Let $\Phi_{^aR_n}(\overline{X}[1:n])$ and $\Phi_{^aR_m}(\overline{Y}[1:m])$ be the standard formulas corresponding to $^aR_n$ and $^aR_m$ respectively. The idea is to show equality by introducing relations between $\overline{X}[1:n]$ and $\overline{Y}[1:m]$. For each $0\leq i\leq m$, we see that $[ ^aR_{n+i}]$ will be contained in $[ ^aR_m][ ^aR_n]$ and further this will happen precisely when there exist exactly $m-i$ numbers $1\leq l_1,\cdots,l_{m-i}\leq m$ such that, for each $1\leq j\leq m-i$, $Y_{l_1},\cdots,Y_{l_{m-i}}$ satisfy the formula $Y_{l_j}=X_{k_j}$, for some $1\leq k_1,\cdots,k_{m-i}\leq n$. Since any $\overline{q}\in\  ^aR_m\times\ ^aR_n$ will be contained in exactly one of $ ^aR_{n+i}$ with $0\leq i\leq m$, we compute how many times $[ ^aR_{n+i}]$ is contained in $[ ^aR_m\times\ ^aR_n]$. 

We need to take care of the already existing order relations appearing in the formulas corresponding to $^aR_m$ and $^aR_n$. Note that there are $m!$ many related sets in $\D_m^a$ which belong to the equivalence class of $^aR_m$ in $\Deft(\Q)$. Since the equivalence class of the cartesian product of each of these $m!$ sets with $[ ^aR_n]$ contains $[ ^aR_{n+i}]$ equal number of times, therefore we ignore the relative ordering between $\overline{Y}[1:m]$ and divide our answer by $m!$ later to account for this.

Now the choice of $l_1,\cdots,l_{m-i}$ and $k_1,\cdots,k_{m-i}$ can be made in $\binom{m}{m-i}\binom{n}{m-i}(m-i)!$ ways. The variables $\overline{Y}[1:m]\setminus \{Y_{1},\cdots,Y_{l_{m-i}}\}$ can be arranged in $(n+1)(n+2)(n+3) \ldots (n+i)$ possible ways to give a copy of $[ ^aR_{n+i}]$. Therefore the number of times $[ ^aR_{n+i}]$ is contained in $[ ^aR_m\times\ ^aR_n]$ is:
     \begin{align}
        \frac{1}{m!} \Bigg( \binom{m}{m-i} \binom{n}{m-i} (m-i)! (n+1)(n+2)\ldots(n+i) \Bigg).
     \end{align}
     
Upon simplification we get the required coefficient of the summand. Further using the fact that $[ ^aR_{n}]\ne[ ^aR_{m}]$ for $m\ne n$ (Corollary \ref{c1}), we get the required sum.
     
(3) The proof is clear.
\hfill
\end{proof}

\begin{lemma}\label{l3.1}
Given $[ ^{a_i}R_{n_i}]\in\Deft(\Q)$ for $1\leq i\leq k$, and $[D]\in\Deft(\Q)$ such that   
$$[D]=\prod\limits_{\substack{i=0}}^k\big[ ^{a_i}R_{n_i}\big],$$
we obtain that $[ ^{\overline{a}}R_{\overline{n}}]$ is contained in $[D]$ where $\overline{n}=(n_i)_{i=1}^{k}$ and $\overline{a}=(a_i)_{i=1}^{k}$. Further any other $^{\overline{a}}R_{\overline{n'}}\in \mathrm{Split}^{\overline{a}}_{n_1+\cdots+n_k}(D)$ such that $H( ^{\overline{a}}R_{\overline{n'}})=\sum\limits_{i=1}^k n_i$ has the property that $n'_j<n_j$ for some $1\leq j\leq k$. Also $\#(\overline{X}_{a_{k-1}}, \overline{X}_{a_k})$ has the maximum value for $^{\overline{a}}R_{\overline{n}}$ among all $^{\overline{a}}R_{\overline{n'}}\in \mathrm{Split}^{\overline{a}}_{n_1+\cdots+n_k}(D)$.                        
\end{lemma}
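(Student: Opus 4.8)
The plan is to analyze the atomic decomposition of the product $D = \prod_{i=1}^k {}^{a_i}R_{n_i}$ by unwinding the definition of cartesian product in $\Deft(\Q)$, and then track which related sets of height $\sum_i n_i$ can appear. I would first set up notation: write the standard formula for each factor ${}^{a_i}R_{n_i}$ on a fresh block of $n_i$ variables, so that $D$ is defined on $n_1 + \cdots + n_k$ variables by the conjunction of these $k$ formulas together with whatever order relations hold \emph{across} the blocks. A related set $R = {}^{\overline a}R_{\overline{n'}}$ lies in $\mathrm{Split}^{\overline a}_{n_1 + \cdots + n_k}(D)$ precisely when it refines each factor's formula, i.e. every block's internal descending chain relative to $a_i$ is respected. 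The key structural observation is that the height of such an $R$ equals $\sum_i n_i$ if and only if no two variables from distinct blocks are identified (collapsing two variables would merge their $\overline X_{e}$-components, dropping the height), so that each $\overline X_{e_l}\in\mathrm{Comp}(R)$ is a singleton coming from exactly one block.

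With this in hand, the containment of $[{}^{\overline a}R_{\overline n}]$ in $[D]$ is the ``straight'' interleaving: place the $n_i$ variables of block $i$ strictly between $a_{i-1}$ and $a_i$ (with $a_0 := +\infty$, or rather the top since $\overline a$ is descending), all blocks disjoint, which is manifestly consistent with every factor's defining formula and has height $\sum_i n_i$; this gives one element of $\mathrm{Split}^{\overline a}_{n_1 + \cdots + n_k}(D)$ with the right isomorphism type, proving the first assertion. For the second assertion I argue by a conservation/counting argument: for a height-maximal $R = {}^{\overline a}R_{\overline{n'}}$, each of the $\sum_i n_i$ singleton $e$-components sits in one of the $k+1$ ``slots'' determined by $\overline a$, and the total count is fixed; I claim that no slot can receive \emph{more} variables than in the straight configuration without some block's internal order with its parameter $a_i$ being violated. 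Concretely, since in ${}^{a_i}R_{n_i}$ all $n_i$ of block $i$'s variables must be $> a_i$, and the parameters in $\overline a$ are strictly decreasing, the variables of block $i$ are forced to lie strictly above $a_i$; so the slot immediately above $a_k$ can contain only variables from block $k$ (all other blocks' variables are $> a_{k-1} > a_k$ but this does not yet pin them below the next parameter up) — more carefully, I would show that if $n'_j \ge n_j$ for all $j$ then equality must hold in each, using that the blocks' constraints, read together with $\#$ being determined by the atomic decomposition, force $\overline{n'} = \overline n$; hence if $\overline{n'} \ne \overline n$ some $n'_j < n_j$. The last claim, that $\#(\overline X_{a_{k-1}}, \overline X_{a_k})$ is maximized by ${}^{\overline a}R_{\overline n}$, follows from the same bookkeeping: block $k$'s $n_k$ variables are the only ones \emph{guaranteed} to be able to occupy the slot between $a_{k-1}$ and $a_k$, and in the straight configuration exactly those $n_k$ land there, which I would show is the maximum by observing any variable from block $j < k$ placed in that slot would have to be $\le a_{k-1}$, contradicting that block $j$'s variables exceed $a_j \ge a_{k-1}$...

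\textbf{Main obstacle.} The delicate point is the middle assertion — ruling out that some height-maximal ${}^{\overline a}R_{\overline{n'}}$ could have $n'_j \ge n_j$ for every $j$ with strict inequality somewhere, which would contradict $\sum n'_j = \sum n_j$, so really the content is showing that \emph{every} height-maximal related set in the split has $\sum_j n'_j = \sum_j n_j$ with the $n'_j$ forming a genuine redistribution constrained by the factor inequalities. I expect the cleanest route is to make precise the ``slot capacity'' bound: for each $1 \le i \le k$, the number of $e$-components of $R$ lying strictly below $a_{i-1}$ (equivalently, the variables constrained to be $< a_{i-1}$ across all blocks) is at most $\sum_{j \ge i} n_j$, because only blocks $j \ge i$ have variables not forced above $a_{i-1}$; combining these $k$ inequalities with the total being $\sum_j n_j$ forces, for height-maximal $R$, that the ``partial sums from the bottom'' agree with those of $\overline n$, whence $\overline{n'}$ is a legal redistribution and in particular $n'_j < n_j$ for some $j$ whenever $\overline{n'}\ne\overline n$, while the bottom slot (below $a_{k-1}$, above $a_k$) attains its maximal fill $n_k$ exactly at ${}^{\overline a}R_{\overline n}$. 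I would also double-check the edge case $-\infty \in \overline a$ and the role of Proposition \ref{p3.1}(3) for re-expressing single-parameter related sets, since the statement's hypothesis only gives single-parameter factors ${}^{a_i}R_{n_i}$ and the conclusion is phrased over the joint parameter set $\overline a$.
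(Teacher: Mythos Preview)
Your approach is correct and rests on the same idea as the paper's: the governing constraint is that each variable of the $i$-th factor must lie strictly above $a_i$, so only blocks $j\ge i$ can contribute to slots at or below position $i$. The paper's proof is much terser, packaging this via Proposition~\ref{p3.1}(3) (each factor $^{a_i}R_{n_i}$, split over $\overline a$, yields only pieces of shape $^{\overline a}R_{(m_1,\ldots,m_i,0,\ldots,0)}$, and among those of height $n_i$ every piece other than $^{\overline a}R_{(0,\ldots,0,n_i,0,\ldots,0)}$ has $\#(\overline X_{a_{i-1}},\overline X_{a_i})<n_i$), then invoking parts (1) and (2) to handle the product.

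Two small points. First, you over-work the middle assertion and misidentify it as the ``main obstacle'': once $H(^{\overline a}R_{\overline{n'}})=\sum_i n_i$ you have $\sum_j n'_j=\sum_j n_j$ by definition of height, and then $\overline{n'}\neq\overline n$ forces some $n'_j<n_j$ by pigeonhole --- no slot-capacity inequalities are needed there. Your slot-capacity bound \emph{is} exactly what is needed for the third assertion (only block $k$'s variables can land between $a_{k-1}$ and $a_k$, so $n'_k\le n_k$), and your argument for it is correct. Second, a minor imprecision: height-maximality requires not only that no two variables from distinct blocks be identified, but also that no variable coincide with any parameter $a_j$ (a block-$i$ variable could equal some $a_j$ with $j<i$); this does not affect the rest of your reasoning.
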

\begin{proof}
The proof of the lemma is easy. From part (3) of the above proposition we know that each $\big[  ^{a_i}R_{n_i}\big]$ contains $^{\overline{a}}R_{0,0,\ldots,n_i,0,\ldots,0}$ and for any other $R\in\Split( ^{a_i}R_{n_i})$ with height $R=n_i$, we have $\#(\overline{X}_{a_{i-1}},\overline{X}_{a_i})<n_i$. Finally using parts (1), (2) of the above proposition we have the lemma.
\hfill 
\end{proof}

Consider the polynomial ring over integers generated by $\N_+\times(\mathbb Q\sqcup\{\-\infty\})$, where the generator corresponding to the pair $(n,a)$ is denoted $ ^aX_n$. Further consider its ideal given by
\begin{align} \label{ideal}
I:=\bigg\la  ({^aX_k} {^{a}X_l}) - \sum\limits _{i=0}^{l} \binom{k+i}{i}\binom{k}{l-i} {^aX_{k+i}} \ |\ 0\leq l\leq k,\ \forall k \in \N_{+} ; a \in Q \sqcup \{-\infty\} \bigg\ra.
\end{align}

\begin{theorem}\label{iso}
With all the above notations we have $K_0(\Q)\simeq \mathcal{O}$, where 
$$
\mathcal{O} := \mathbb{Z}\big[{^aX_n\ |\ n \in \N_{+}, a \in Q \sqcup \{-\infty\}}\big]/I.$$
\end{theorem}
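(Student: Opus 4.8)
\textbf{Proof proposal for Theorem \ref{iso}.}

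The plan is to build a surjective ring homomorphism $\varphi:\mathcal{O}\to K_0(\Q)$ and then show it is injective. For surjectivity, the first step is to understand a spanning set for $K_0(\Q)$ as an abelian group. By Theorem \ref{th:semiring}, $K_0(\Q)$ is the Grothendieck ring of the cancellative semiring $\Deft(\Q)$, so every element is a difference $[D_1]-[D_2]$ of classes of definable sets; by Equation \eqref{e4} and the atomic decomposition (Proposition \ref{p2}, Definition \ref{d6}), each $[D]$ is a sum of classes $[R]$ of related sets. A related set $R\in\At$, by the normal form in Equation \eqref{e3}, is (up to the definable bijections of Proposition \ref{p3}) determined by its $\equiv$-class, which in turn is encoded by the "gap data" $\#(\overline{X}_{a_i},\overline{X}_{a_{i+1}})$ together with the counts below $\overline{X}_{a_n}$ and above $\overline{X}_{a_1}$. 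Adjoining the formal minimum $-\infty$ as in \S\ref{ss3.1} lets us absorb the "below $a_n$" count into a bona fide gap, and Remark \ref{rmk3} lets us discard the "above $a_1$" singleton components; hence every $[R]$ equals some $[\,^{\overline{a}}R_{\overline{n}}\,]$. So define $\varphi$ by sending the generator $^aX_n$ to $[\,^aR_n\,]\in K_0(\Q)$; part (2) of Proposition \ref{p3.1} is exactly the statement that the defining relations of $I$ in Equation \eqref{ideal} are killed, so $\varphi$ descends to $\mathcal{O}$. For surjectivity I must check that every $[\,^{\overline{a}}R_{\overline{n}}\,]$ with $\overline{a}=(a_1,\dots,a_k)$ lies in the image: by part (1) of Proposition \ref{p3.1} applied repeatedly, $[\,^{\overline{a}}R_{\overline{n}}\,]=\prod_{i=1}^k[\,^{a_i}R_{n_i}\,]=\varphi\big(\prod_{i=1}^k\,^{a_i}X_{n_i}\big)$, using that $^{a_i}R_{n_i}$ makes sense with the single parameter $a_i$ and that the product over disjoint "blocks" of variables separated by the parameters is literally a cartesian product.

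For injectivity I would exhibit a $\Z$-linear section or, equivalently, compute a normal form for elements of $\mathcal{O}$ and show $\varphi$ is injective on normal forms. The relations in $I$ only involve a single parameter $a$ at a time and are "upper triangular" in the height index: the relation ${^aX_k}{^aX_l}=\sum_{i=0}^{l}\binom{k+i}{i}\binom{k}{l-i}\,{^aX_{k+i}}$ (for $l\le k$) rewrites any product of two same-parameter generators as a $\Z_{\ge 0}$-combination of single generators of strictly larger height, with leading term $^aX_{k+l}$ having coefficient $\binom{k+l}{l}>0$. Consequently every monomial in $\mathcal{O}$ reduces to a $\Z$-linear combination of "square-free in each parameter" monomials $\prod_{a}{^aX_{n_a}}$ (at most one factor per parameter $a$), and these form a $\Z$-spanning set; the triangularity plus positivity of leading coefficients should show they are $\Z$-linearly independent in $\mathcal{O}$, so they are a $\Z$-basis. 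Now apply $\varphi$: the basis monomial $\prod_{a}{^aX_{n_a}}$ maps to $[\,^{\overline{a}}R_{\overline{n}}\,]$ with $\overline{a}$ the (descending) list of the parameters used and $\overline{n}$ the corresponding tuple. By Theorem \ref{th:semiring} and Corollary \ref{c1} these classes are pairwise distinct in $\Deft(\Q)$, and one checks — this is where Lemma \ref{l3.1} does the real work — that they remain $\Z$-linearly independent after passing to the Grothendieck group $K_0(\Q)$: Lemma \ref{l3.1} identifies $^{\overline{a}}R_{\overline{n}}$ as the unique maximal-height, maximal-$\#(\overline{X}_{a_{k-1}},\overline{X}_{a_k})$ atom in any product expansion, so in the expansion of a product of $\varphi$-images the "leading" related sets are all distinct and cannot cancel, which forces any kernel relation to be trivial.

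The main obstacle is this last linear-independence argument in $K_0(\Q)$ rather than merely in $\Deft(\Q)$: a priori a $\Z$-combination $\sum_j c_j[\,^{\overline{a}^{(j)}}R_{\overline{n}^{(j)}}\,]$ could vanish in $K_0(\Q)$ because, after adding a common term $[E]$ to both the positive and negative parts and re-expanding everything into related-set atoms over a common parameter set (via Remark \ref{rmk2}), the $\Split$-multisets of the two sides coincide. I would handle this by choosing, among all related sets appearing with nonzero coefficient, one of maximal height and — among those — maximal trailing gap $\#(\overline{X}_{a_{k-1}},\overline{X}_{a_k})$, and argue via Lemma \ref{l3.1} and Corollary \ref{c1} that such a related set cannot be produced by $\Split$ of any of the other terms nor absorbed into the common term $[E]$, giving a contradiction unless all $c_j=0$; since $\equiv$ refines into finitely many classes at each finite parameter set, this book-keeping terminates. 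A secondary technical point is verifying carefully that enlarging the parameter set (Remark \ref{rmk2}) interacts correctly with the single-parameter relations — i.e.\ that the ideal $I$ is exactly the kernel and no "cross-parameter" relations are hiding — but parts (1) and (3) of Proposition \ref{p3.1} together with Lemma \ref{l3.1} show that products of different-parameter generators split into cartesian products with no new relations, so $I$ suffices. Modulo these, $\varphi$ is a ring isomorphism and $K_0(\Q)\simeq\mathcal{O}$.
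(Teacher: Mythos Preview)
Your approach is the paper's: define $\varphi$ on generators by $^aX_n\mapsto[^aR_n]$, invoke Proposition \ref{p3.1}(2) for well-definedness, parts (1) and (3) for surjectivity, and a leading-term argument via Lemma \ref{l3.1} for injectivity. The injectivity sketch --- reduce modulo $I$ so that the top-height monomials are square-free products $\prod_a{^aX_{n_a}}$, pick the lexicographically extreme one, and use Lemma \ref{l3.1} to isolate an atom that no other monomial can produce --- is exactly what the paper does (the paper phrases it as: choose $g_{\max}$ with $n_k$ maximal, then $n_{k-1}$ maximal, and so on).

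There is, however, one repeated slip that breaks your surjectivity argument as written. You assert $[\,^{\overline{a}}R_{\overline{n}}\,]=\prod_{i=1}^k[\,^{a_i}R_{n_i}\,]$ and later ``the basis monomial $\prod_a{^aX_{n_a}}$ maps to $[\,^{\overline{a}}R_{\overline{n}}\,]$''. Both are false: the single-parameter class $[^{a_i}R_{n_i}]$ is the class of $X_1>\cdots>X_{n_i}>a_i$ with \emph{no upper bound}, so over the full parameter set $\overline{a}$ it splits, via Proposition \ref{p3.1}(3), into many pieces, and hence the product $\prod_i[^{a_i}R_{n_i}]$ is a genuine sum of related-set classes of which $[\,^{\overline{a}}R_{\overline{n}}\,]$ is only the leading term --- this is precisely the content of Lemma \ref{l3.1}, which you yourself invoke correctly a few lines later. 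Consequently you cannot exhibit a direct preimage of $[\,^{\overline{a}}R_{\overline{n}}\,]$. What does work (and what the paper means by ``follows from parts (1) and (3)'') is a triangular induction: at each height level the assignment $\overline{n}\mapsto\varphi\big(\prod_i{^{a_i}X_{n_i}}\big)$ is upper-triangular in the lexicographic order on $\overline{n}$ with leading coefficient $1$ (Lemma \ref{l3.1}), so one solves for $[\,^{\overline{a}}R_{\overline{n}}\,]$ recursively. Once you replace ``equals'' by ``has leading term'', both halves of your argument go through and coincide with the paper's.
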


\begin{proof}
  
Define the association map 
\begin{align*}
    \zeta : \{^aX_n\ |\ a\in Q, n\in\N_{+}\}&\longrightarrow K_0(\Q)\\
    ^aX_n &\longmapsto [ ^aR_n]
\end{align*}
Extend the association map $\zeta$ naturally to a ring homomorphism from $\mathcal{O}$ to $K_0(\Q)$ and denote it also by $\zeta$. We will show that $\zeta$ is an isomorphism. $\zeta$ is well defined from part (2) of Proposition \ref{p3.1}. Surjectivity  of $\zeta$ is easy to show and follows from part (1) and (3) of Proposition \ref{p3.1}.

Now we will show that $ \zeta$ is an injection. Let $f\in\mathbb{Z}\big[{^aX_n\ |\ n \in \N_{+} ; a \in Q \sqcup \{-\infty\}}\big]$, let $\overline{f}$ denote the corresponding element in $\mathcal{O}$. For some $l\in\N_{+}$, let $g=t\prod\limits_{i=1}^l {(^{a_i}X_{n_i})^{m_i}}$ be a monomial of $f$, where $n_i,m_i\in\N$ for each $1\leq i\leq l$ and $t\ne0\in \Z$. Define the height of $g$, denoted $H(g)$, by
$$H(g):=H\Big(\zeta\big(\overline{\prod\limits_{i=1}^l {(^{a_i}X_{n_i})^{m_i}}}\big)\Big)=\sum\limits_{i=1}^k n_im_i.$$ 
Extend the definition of height to $f$ as $$H(f):=\max\{ H(g)\ |\ g\mbox{ is a monomial of }f\}.$$ 
Now each $(^aX_n)^m\equiv t$ $^aX_{nm}+ \mbox{ lower height terms }\mbox{(mod I)} $ for some $t\in\N$ obtained from part (2) of Proposition \ref{p3.1}. Hence every monomial $g$ of $f$ can be written as 
\begin{equation}\label{e8}
g\equiv t'\prod\limits_{i=1}^l {^{a_i}X_{n_im_i}}+\mbox{ lower height terms (mod I)}
\end{equation}
for some $t'\ne0\in \Z$. Note that if $f\equiv f' \mbox{ (mod I)}$ then $H(f)=H(f').$ 

Choose $\overline{f}\in \mbox{Ker}(\zeta)$ such that $H(f)$ is minimum. Let $f'\equiv f \mbox{ (mod I)}$ be such that every monomial $g$ of $f'$ with $H(g)=H(f)$ has the form $$c'\prod\limits_{i=1}^l {^{a_i}X_{n_i}}$$
for some $l\in \N_{+}$.
Equation \eqref{e8} implies that such an $f'$ exists. 
Let 
$$\overline{b}:=\{q\ |\ ^qX_m \mbox{ appears in some monomial of }f\mbox{ for some }m\}$$ 
and $k=\mathrm{card}(\overline{b})$.
Assume $k>1$ otherwise the proof is trivial (part (2) of Proposition \ref{p3.1}). Let $$f'=f_1-f_2\textit{ ,}$$ where $f_1,f_2$ have all positive coefficients. We have $\zeta(\overline{f'})=0$, hence $\zeta(\overline{f_1})=\zeta(\overline{f_2})$ in $\Deft(\Q)$. WLOG assume that $\overline{b}$ is ordered in descending order. Let $$g_{max}=\prod\limits_{i=1}^k {^{b_i}X_{n_i}}$$ with $H(g_{max})=H(f)$ be the monomial of $f$ such that $n_k$ is maximum among all monomials of $f$. If there exist two or more such monomials then among them choose the one in which $n_{k-1}$ is maximum and so on. 
Note that this process will render us a unique $g_{max}$. WLOG assume that $g_{max}$ is in $f_1$. 
We claim that $\big[^{\overline{b}}R_{\overline{n}}\big]$ where $\overline{b}=(b_1,\ldots,b_k)$ and $\overline{n}=(n_1,\ldots,n_k)$ is contained in $\zeta(\overline{g}_{max}) \in \Deft(\Q)$ and $\big[^{\overline{b}}R_{\overline{n}}\big]$ is not contained in $\zeta(\overline{g})$ for any other monomial $g$ of $f$. To see this suppose there is $g\ne g_{max}$ and $\zeta(\overline{g})$ contains $\big[^{\overline{b}}R_{\overline{n}}\big]$. WLOG assume 
$$g=\prod\limits_{j=1}^l \big(^{b_j}X_{n_j'}\big)\prod\limits_{i=l+1}^{k}\big(^{b_i}X_{n_i}\big)$$
for some $l\leq k$. By construction we see that $n_l'<n_l$. Lemma \ref{l3.1} implies that $\zeta\Big(\overline{\prod\limits_{j=1}^l\  ^{b_j}X_{n_j'}}\Big)$ contains $\big[^{(b_1,\ldots,b_l)}R_{(n_1',\ldots,n_l')}\big]$ and among all $[R]$ contained in $\zeta\Big(\overline{\prod\limits_{j=1}^l({^{b_j}}X_{n_j'}})\Big)$, $\#(\overline{X}_{b_{l-1}},\overline{X}_{b_l})$ attains maximum value for $^{(b_1,\ldots,b_l)}R_{(n_1',\ldots,n_l')}$. Since for $i>l$ we have $b_i<b_l$, by part (3) of Proposition \ref{p3.1}, $\zeta(\overline{g})$ does not contain $\big[^{\overline{b}}R_{\overline{n}}\big]$ in $\Deft(\Q)$.  Hence the coefficient of $\overline{g}_{max}=0$ which is a contradiction, therefore $ \overline{f}=0$. This establishes that $\zeta$ is an isomorphism between $K_0(\Q)$ and $\mathcal{O}$. 
\hfill
\end{proof}

\section{DLO with end points}\label{s12}
Let $\Q$ denote a DLO without end points, and let $_m\Q,\ \Q_M,\ _m\Q_M$ denote DLOs obtained by appending to $\Q$ only the minimum end point $m$, only the maximum end point $M$ and both end points $m$ and $M$ respectively. We will restrict our discussion to $_m\Q$ as other cases can be dealt similarly.
Let $\T'$ represent the theory of DLO with only the minimum point. This theory does not admit complete elimination of quantifiers, but we view a definable subset of any of its models as a definable subset of a DLO without end points as explained below.

The structure $\mathrm{Ext}_{\Q}$ obtained by appending a copy of $\Q$, say $\Q'$, below the minimum of $_m\Q$ is a model of theory $\T$. Consider a definable subset $D$ of $_m\Q$ and let $\Phi_D$ be the formula corresponding to it. Observe that $D$ can be seen as a definable subset in $\mathrm{Ext}_{\Q}$. Let $\overline{a}\in Q\sqcup\{m\}$ be the set of parameters appearing in $\Phi_D$ and $n$ be the number of variables appearing in $\Phi_D$. Then the collection of definable subsets of $\mathrm{Ext}_\Q$ with parameters in the set $Q\sqcup \{m\}$ is same as the collection of definable subsets of $_m\Q$. Proposition \ref{p2} implies existence of $R_1, R_2,\cdots,R_k\in \At(\mathrm{Ext}_\Q)$ such that $D=\bigsqcup\limits_{i=1}^k R_i$, where each $R_i$ is a subset of $_m\Q$. 
\begin{lemma}\label{lemma-endpt}
For $D_1,D_2\in \overline{\mathrm{Def}}( _m\Q)$, we have $[D_1]=[D_2]$ in $\Deft( _m\Q)$ iff $[D_1]=[D_2]$ in $\Deft(\mathrm{Ext}_{\Q})$.
\end{lemma}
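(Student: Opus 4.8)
The plan is to show that the notions of definable bijection for $_m\Q$ and for $\mathrm{Ext}_\Q$ agree on the definable subsets of $_m\Q$, and then quote Theorem \ref{th:semiring} (applied inside $\mathrm{Ext}_\Q$, which is a DLO without end points). The key point already set up above is that every definable subset $D$ of $_m\Q$ is simultaneously a definable subset of $\mathrm{Ext}_\Q$: it is a finite disjoint union $\bigsqcup_{i=1}^k R_i$ of atoms $R_i \in \At(\mathrm{Ext}_\Q)$, each of which happens to lie entirely inside $_m\Q$. So the only thing to check is that adding or removing the extra ``room'' $\Q'$ below $m$ does not change whether two such sets are in definable bijection. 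One direction is immediate: a definable bijection $f : D_1 \to D_2$ in $_m\Q$, being given by a $\{<\}$-formula with parameters from $Q \sqcup \{m\}$, is literally the same formula inside $\mathrm{Ext}_\Q$, and it still defines the graph of a bijection $D_1 \to D_2$ there since $D_1, D_2$ and the graph are unchanged as sets. Hence $[D_1]=[D_2]$ in $\Deft(_m\Q)$ implies $[D_1]=[D_2]$ in $\Deft(\mathrm{Ext}_\Q)$.

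For the converse, suppose $[D_1]=[D_2]$ in $\Deft(\mathrm{Ext}_\Q)$. By Theorem \ref{th:semiring} (or directly by Lemma \ref{main-lemma}), there is a finite parameter set $\overline{c}$ from $\mathrm{Ext}_\Q$ with $\chi_n^{\overline{a}\overline{c}}(D_1) = \chi_n^{\overline{a}\overline{c}}(D_2)$, where $\overline a$ contains the original parameters. The issue is that $\overline c$ may use points of $\Q'$ below $m$, which are not available in $_m\Q$. The strategy is to argue that such parameters are harmless: since $D_1$ and $D_2$ both lie inside $_m\Q$, i.e. both lie in the region $\{X > m\} \cup \{X = m\}$ (coordinatewise), the atoms in $\mathrm{Split}$ that actually meet $D_1$ or $D_2$ only ``see'' the order relations among coordinates, the parameters in $\overline a$, and the cut ``$> m$''; adding parameters below $m$ merely refines atoms in a region disjoint from $D_1 \cup D_2$ and does not change the similarity types (colors) of the atoms appearing in the canonical decomposition of $D_1$ or $D_2$. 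Concretely, one replaces each parameter $c \in \overline c$ lying in $\Q'$ by $m$ itself (or simply discards it): the induced map on $\At$-atoms below $D_i$ is a bijection preserving $\equiv$-classes, by the description of related sets in Equation \eqref{e3} and of $\equiv$ in Definition \ref{d5} — the counts $\#(\overline X_{a_i}, \overline X_{a_{i+1}})$ and the boundary counts in Definition \ref{d5} are unaffected when the extra parameters sit strictly below the minimum of everything occurring in $D_i$. This yields $\chi_n^{\overline a \overline{c'}}(D_1) = \chi_n^{\overline a \overline{c'}}(D_2)$ for a parameter set $\overline{c'} \subseteq Q \sqcup \{m\}$, and then Proposition \ref{p4}, interpreted inside $\mathrm{Ext}_\Q$ with parameters from $Q \sqcup \{m\}$ — equivalently inside $_m\Q$ — produces a definable bijection $D_1 \to D_2$ whose defining formula uses only parameters from $Q \sqcup \{m\}$, i.e. a definable bijection in $_m\Q$. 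Hence $[D_1]=[D_2]$ in $\Deft(_m\Q)$.

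I expect the main obstacle to be making precise and airtight the claim that parameters lying in $\Q'$ (below $m$) can be removed without altering the global characteristics of $D_1$ and $D_2$. This requires a small lemma: if $\overline c$ is a parameter set and $c \in \overline c$ is strictly below every coordinate value attained on $D_1 \cup D_2$ and below every other parameter used, then $\chi_n^{\overline a \overline c}(D_i) = \chi_n^{\overline a (\overline c \setminus \{c\})}(D_i)$. This follows by exhibiting, for the single split induced by $c$, a color-preserving bijection between the atoms of $\mathcal D_n^{\overline a \overline c}$ meeting $D_i$ and those of $\mathcal D_n^{\overline a(\overline c\setminus\{c\})}$ meeting $D_i$: an atom meeting $D_i$ places all $n$ variable-coordinates strictly above $c$, so the component $\overline X_c$ (if present) is empty and $c$ contributes only a new bottom component with no variables, which Definition \ref{d5}(2) ignores. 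Iterating over all parameters of $\overline c$ in $\Q'$ gives the reduction. Everything else — the ``$\Rightarrow$'' direction, and the final invocation of Proposition \ref{p4} and Theorem \ref{th:semiring} — is routine.
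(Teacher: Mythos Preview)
Your proposal is correct and follows essentially the same route as the paper: the forward direction is immediate, and for the converse both you and the paper use Lemma~\ref{main-lemma} to obtain matching global characteristics over some parameter set $\overline{c}$, then observe that parameters from $\Q'$ can be discarded because every related set meeting $D_1$ or $D_2$ has all its $\#$-counts vanishing in the region below $m$, leaving matching characteristics over $\overline{c'}=\overline{c}\cap(Q\sqcup\{m\})$, whence Proposition~\ref{p4} yields a bijection definable in $_m\Q$. Your write-up is in fact a bit more careful than the paper's about the final step (the paper's concluding sentence elides the appeal to Proposition~\ref{p4}).
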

\begin{proof}
($\Rightarrow$): The proof is clear. 

($\Leftarrow$): If $[D_1]=[D_2]$ in $\Deft(\mathrm{Ext}_{\Q})$ then there exists a parameter set $\overline{c}\in Q'\sqcup\{m\}\sqcup Q$ such that $$\chi_n^{\overline{c}}(D_1)=\chi_n^{\overline{c}}(D_2).$$ Let
$\overline{c'}:=\overline{c}\cap (Q\sqcup\{m\})$. If $\overline{c'}=\overline{c}$ there is nothing to be proven. Suppose not, then let $l=\mathrm{card}(\overline{c})$ and $k$ be the maximum index such that $c_k\geq m$. As $D_1$ and $D_2$ are definable subsets of $_m\Q$ we have for each related set $R\in \mathrm{Split}_n^{\overline{c}}(D_1)$, $$\#(\overline{X}_{c_{k}},\overline{X}_{c_{k+1}})=\#(\overline{X}_{c_{k+1}},\overline{X}_{c_{k+2}})=\cdots=\#(\overline{X}_{c_{l-1}},\overline{X}_{c_l})=0$$ which implies that each $R\in \mathrm{Split}_n^{\overline{c}}(D_1)$ (similarly for $D_2$) can be viewed as a related set in $D_n^{\overline{c'}}$. Hence a definable bijection between $D_1$ and $D_2$ as subsets of $\mathrm{Ext}_{\Q}$ is also a definable bijection between $D_1$ and $D_2$ as subsets of $_m\Q$.
\hfill
\end{proof}

The above lemma implies that $\Deft(_m\Q)\hookrightarrow\Deft(\mathrm{Ext}_{\Q})$ as $\Deft(\mathrm{Ext}_{\Q})$  is generated by elements of the form $[^aR_n]$ for $n\in\N$ and $a\in Q'\sqcup\{m\}\sqcup Q$. Hence every element in $\Deft(_m\Q)$ is generated by elements of the form $[^aR_n]$ for $n\in\N$ and $a\in Q\sqcup\{m\}$. Cancellativity (Lemma \ref{cancellative}) will ensure that $K_0( _m\Q)\hookrightarrow K_0(\mathrm{Ext}_{\Q})$. In view of the above discussion we have the following theorem.
\begin{theorem}\label{wep}
$K_0(_m\Q)\simeq K_0(\Q)$.
\end{theorem}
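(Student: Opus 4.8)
The plan is to leverage the preceding discussion which already establishes the essential ingredients: Lemma \ref{lemma-endpt} shows $\Deft(_m\Q)\hookrightarrow\Deft(\mathrm{Ext}_{\Q})$, and since $\mathrm{Ext}_{\Q}$ is a DLO without end points, $\Deft(\mathrm{Ext}_{\Q})$ is generated by the classes $[^aR_n]$ with $a\in Q'\sqcup\{m\}\sqcup Q$. The image of $\Deft(_m\Q)$ inside it consists precisely of those elements generated by $[^aR_n]$ with $a$ restricted to $Q\sqcup\{m\}$, since a definable subset of $_m\Q$ uses only parameters from $Q\sqcup\{m\}$ and its related-set decomposition (via Proposition \ref{p2}) stays inside $_m\Q$. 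The first step is therefore to identify the sub-semiring $S\subseteq\Deft(\mathrm{Ext}_{\Q})$ generated by $\{[^aR_n]\mid a\in Q\sqcup\{m\},\ n\in\N\}$ with $\Deft(_m\Q)$, which Lemma \ref{lemma-endpt} already delivers.

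Next I would pass to $K_0$. Since $\Deft(_m\Q)$ is cancellative (Theorem \ref{cancellative} applies verbatim — its proof only used Theorem \ref{th:semiring} and Lemma \ref{main-lemma}, and the latter goes through for $_m\Q$ because we may realize $D_1,D_2$ inside $\mathrm{Ext}_{\Q}$ by Lemma \ref{lemma-endpt}), the natural map $\Deft(_m\Q)\to K_0(_m\Q)$ is injective, and likewise $\Deft(\mathrm{Ext}_{\Q})\hookrightarrow K_0(\mathrm{Ext}_{\Q})$. The inclusion $\Deft(_m\Q)\hookrightarrow\Deft(\mathrm{Ext}_{\Q})$ then induces, by the universal property of group completion of a cancellative semiring, an injection $K_0(_m\Q)\hookrightarrow K_0(\mathrm{Ext}_{\Q})\simeq K_0(\Q)$ whose image is the subring generated by $\{[^aR_n]\mid a\in Q\sqcup\{m\},\ n\in\N_+\}$. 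The final step is to show this subring is in fact all of $K_0(\Q)$, i.e.\ that restricting the parameter alphabet from $Q'\sqcup\{m\}\sqcup Q$ to $Q\sqcup\{m\}$ loses nothing. This follows from Theorem \ref{iso} together with $\aleph_0$-categoricity: any generator $[^aR_n]$ of $K_0(\mathrm{Ext}_{\Q})$ with $a\in Q'$ can be matched with $[^{a'}R_n]$ for some $a'\in Q$, because an order-automorphism of $\mathrm{Ext}_{\Q}$ (or rather, the fact that $\mathcal O$ depends on $Q\sqcup\{-\infty\}$ only through its being a DLO with a least element) identifies the two; more directly, the relations in the ideal $I$ of Theorem \ref{iso} are purely internal to each fixed parameter $a$ and do not mix different parameters, so the subring generated by any dense-enough subalphabet containing a least element is the whole ring. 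Concretely: the map $\mathcal O\to\mathcal O$ built from any order-isomorphism $Q'\sqcup\{m\}\sqcup Q\xrightarrow{\sim}Q\sqcup\{m\}$ (which exists since both are DLOs with least element $m$, by $\aleph_0$-categoricity of that theory — or one simply observes $Q\sqcup\{m\}$ is already a DLO with least point, isomorphic to $(Q'\sqcup\{m\}\sqcup Q)$) carries the presentation to itself, giving $K_0(_m\Q)\simeq K_0(\mathrm{Ext}_{\Q})\simeq K_0(\Q)$.

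The main obstacle I anticipate is the last step: rigorously justifying that the sub-semiring of $\Deft(\mathrm{Ext}_{\Q})$ generated by the parameters in $Q\sqcup\{m\}$ is not a proper sub-semiring but coincides with the whole thing up to isomorphism. The subtlety is that $Q\sqcup\{m\}$ as a \emph{linear order} has a least element, whereas $Q'\sqcup\{m\}\sqcup Q$ does \emph{not} — so these are not isomorphic orders, and one must argue instead that the abstract ring $\mathcal O=\Z[{}^aX_n]/I$ only "sees" each parameter in isolation. The relations defining $I$ (Equation \eqref{ideal}) involve a single superscript $a$ at a time, and the multiplicative cross-relations between distinct parameters (Proposition \ref{p3.1}(1)) are consequences of these together with the semiring axioms; hence $\mathcal O$ built over $Q\sqcup\{m\}$ and $\mathcal O$ built over $Q'\sqcup\{m\}\sqcup Q$ are abstractly isomorphic as rings (both are $\Z$ adjoined one "copy" of the universal DLO-parameter structure per point, and the point set is countably infinite in both cases). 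Making this bookkeeping precise — ideally by exhibiting the isomorphism on generators induced by any bijection of the underlying point sets that preserves the $-\infty$/least-element role — is the delicate part, but it is essentially combinatorial and carries no real analytic difficulty.

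\begin{proof}
By Lemma \ref{lemma-endpt}, the inclusion $\overline{\mathrm{Def}}(_m\Q)\subseteq\overline{\mathrm{Def}}(\mathrm{Ext}_{\Q})$ descends to an injection $\Deft(_m\Q)\hookrightarrow\Deft(\mathrm{Ext}_{\Q})$ of semirings, whose image is the sub-semiring generated by $\{[^aR_n]\mid n\in\N,\ a\in Q\sqcup\{m\}\}$, since every definable subset of $_m\Q$ decomposes (Proposition \ref{p2}) into related sets with parameters drawn from $Q\sqcup\{m\}$ and lying inside $_m\Q$.

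The proof of Theorem \ref{cancellative} applies verbatim to $_m\Q$: Theorem \ref{th:semiring} and Lemma \ref{main-lemma} hold for $_m\Q$ because, by Lemma \ref{lemma-endpt}, definable bijections and global characteristics of subsets of $_m\Q$ may be computed inside $\mathrm{Ext}_{\Q}$. Hence $\Deft(_m\Q)$ is a cancellative semiring, so the canonical map $\Deft(_m\Q)\to K_0(_m\Q)$ is injective, and likewise $\Deft(\mathrm{Ext}_{\Q})\hookrightarrow K_0(\mathrm{Ext}_{\Q})$. Applying the universal property of the Grothendieck ring (group completion) to the semiring inclusion of the previous paragraph yields an injective ring homomorphism
$$
\iota\colon K_0(_m\Q)\hookrightarrow K_0(\mathrm{Ext}_{\Q}),
$$
and its image is the subring of $K_0(\mathrm{Ext}_{\Q})$ generated by $\{[^aR_n]\mid n\in\N_+,\ a\in Q\sqcup\{m\}\}$.

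It remains to show that this subring equals all of $K_0(\mathrm{Ext}_{\Q})$, which by Theorem \ref{iso} is $\mathcal O=\Z\big[{^aX_n}\mid n\in\N_+,\ a\in (Q'\sqcup\{m\}\sqcup Q)\sqcup\{-\infty\}\big]/I$. The ideal $I$ of Equation \eqref{ideal} is generated by relations each of which involves a single superscript, and the cross-parameter products (Proposition \ref{p3.1}(1)) are derivable from these relations and the ring axioms. Thus $\mathcal O$ is, as an abstract ring, obtained by adjoining to $\Z$ one "universal parameter block" $\Z[^aX_n\mid n\in\N_+]/\langle{({^aX_k}{^aX_l})-\sum_{i=0}^{l}\binom{k+i}{i}\binom{k}{l-i}{^aX_{k+i}}}\rangle$ for each point $a$ of the underlying set, amalgamated over $\Z$ with no relations among distinct blocks beyond those forced by the multiplicative structure already present in $\Deft$. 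Both $Q\sqcup\{m\}$ (as a point set, disregarding order) and $Q'\sqcup\{m\}\sqcup Q$ together with the formal symbol $-\infty$ are countably infinite, so any bijection between them sending $-\infty$ to $-\infty$ and $m$ to $m$ induces a ring isomorphism of the two presentations. The generators indexed by $Q\sqcup\{m\}$ therefore generate $\mathcal O$, so $\iota$ is surjective, hence an isomorphism. Combining with $K_0(\mathrm{Ext}_{\Q})\simeq K_0(\Q)$ (Theorem \ref{iso}, as $\mathrm{Ext}_{\Q}$ is a DLO without end points) gives $K_0(_m\Q)\simeq K_0(\Q)$. The cases $\Q_M$ and $_m\Q_M$ are entirely analogous.
\hfill
\end{proof}
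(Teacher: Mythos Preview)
Your proof contains a genuine error in the final step. You claim that $\iota\colon K_0(_m\Q)\hookrightarrow K_0(\mathrm{Ext}_{\Q})$ is \emph{surjective}, arguing that ``the generators indexed by $Q\sqcup\{m\}$ therefore generate $\mathcal O$''. This is false. In $K_0(\mathrm{Ext}_{\Q})\simeq\mathcal O_{\mathrm{Ext}}$, the generator ${}^{-\infty}X_1$ (i.e.\ the class $[\mathrm{Ext}_{\Q}]$ itself) does \emph{not} lie in the subring generated by $\{{}^aX_n : a\in Q\sqcup\{m\}\}$; your own tensor-product description of $\mathcal O$ makes this clear, since the block for $a=-\infty$ is a separate tensor factor. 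The existence of an abstract ring isomorphism between two presentations does not imply that the smaller generating set generates the larger ring---you have conflated ``isomorphic to'' with ``equal to'', exactly the confusion you yourself flagged as the main obstacle in your discussion.

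Your strategy can be salvaged: drop the surjectivity claim, and argue only that the image of $\iota$ is \emph{abstractly} isomorphic to $K_0(\Q)$, since both are countably-infinite tensor products of the single-parameter block $\mathcal O_a$. But the paper's approach is more direct and avoids this detour entirely. The paper also embeds $\Deft(\Q)\hookrightarrow\Deft(\mathrm{Ext}_{\Q})$ by viewing $\Q$ as the top copy $\{x>m\}$ inside $\mathrm{Ext}_{\Q}$. The key observation---which you missed---is that under this embedding, the generator $[{}^{-\infty}R_n]\in\Deft(\Q)$ (the class of $X_1>\cdots>X_n$ in $\Q^n$) becomes exactly $[{}^mR_n]$ in $\Deft(\mathrm{Ext}_{\Q})$, since restricting all coordinates to lie in $\Q\subset\mathrm{Ext}_{\Q}$ is the condition $X_i>m$. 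Hence the images of $\Deft(\Q)$ and $\Deft(_m\Q)$ in $\Deft(\mathrm{Ext}_{\Q})$ are literally the \emph{same} sub-semiring (both generated by $[{}^aR_n]$ with $a\in Q\sqcup\{m\}$), and the isomorphism $\Deft(_m\Q)\simeq\Deft(\Q)$ is immediate, with no appeal to Theorem~\ref{iso} or any counting of tensor factors.
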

\begin{proof}
Note that it suffices to show that $\Deft(_m\Q)\simeq\Deft(\Q)$. We have the following semiring embeddings, $\Deft(\Q)\hookrightarrow \Deft(\mathrm{Ext}_{\Q})$ and $\Deft(_m\Q)\hookrightarrow\Deft(\mathrm{Ext}_{\Q})$. It only remains to find a representative of the  equivalence class $[{^mR_n}]\in \Deft(_m\Q)$ in $\Deft(\Q)$. For this observe that that $[^{-\infty}R_n]\in\Deft(\Q)$ corresponds to the equivalence class of the related set corresponding to the formula $X_1<X_2<\cdots<X_n$, and this class gets mapped to $[{^mR_n}]$   under the embedding $\Deft(\Q)\hookrightarrow\Deft(\mathrm{Ext}_{\Q})$. This implies that $\Deft(\Q)$ and $\Deft(_m\Q)$ go to the same subset of $\Deft(\mathrm{Ext}_{\Q})$ under their natural embedding as a subsemiring. \hfill
\end{proof}
\begin{example}
Let $\mathbb{Q}_+$ be the set of positive rational numbers and $   \mathbb{Q}_{\geq 0}$ be the set of non-negative rational numbers. In the view of Lemma \ref{lemma-endpt} and Theorem \ref{wep} we have $K_0((\mathbb{Q}_+,<))\simeq K_0((\mathbb{Q}_{\geq 0},<))\hookrightarrow K_0((\mathbb{Q},<))$. Given any order isomorphism between $\mathbb Q$ and $\mathbb Q_+$, we get an isomorphism between $K_0(\mathbb Q)$ and $K_0(\mathbb Q_+)$; thus we witness the failure of PHP for $K_0(\mathbb Q)$ as a structure of the signature of rings!
\end{example}
\section{Combinatorial Properties of $K_0(\Q)$}\label{s11}
In this section we discuss some interesting combinatorial properties of $K_0(\Q)$ for a DLO without end points $\Q$. Note that the properties will be true in general for any DLO $\Q$ due to Theorem \ref{wep}.

First we note that a DLO satisfies PHP.
\begin{theorem}\label{PHP}
The ring $K_0(\Q)$ is partially ordered or, equivalently, $\Q$ satisfies PHP.
\end{theorem}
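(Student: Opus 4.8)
The plan is to exhibit an explicit candidate positive part $P \subseteq K_0(\Q)$ satisfying the five axioms of a partially ordered ring, and verify that $[D] \in P$ for every definable $D$; by Theorem \ref{ts8.2} this is equivalent to PHP. Using the isomorphism $K_0(\Q) \simeq \mathcal{O}$ of Theorem \ref{iso} and the fact that $\Deft(\Q)$ embeds into $K_0(\Q)$ (cancellativity, Theorem \ref{cancellative}), the natural first guess for $P$ is the image of $\Deft(\Q)$ itself, i.e. the sub\emph{semiring} of $K_0(\Q)$ generated by the classes $[{}^aR_n]$ of related sets. Axioms (1)--(4) ($0, 1 \in P$, closure under $+$ and $\cdot$) and the requirement $[D] \in P$ for all $D \in \overline{\Def}(\Q)$ are then immediate from the definitions. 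The entire content of the theorem is therefore axiom (5): for $x \ne 0$, if $x \in P$ then $-x \notin P$; equivalently, no nonzero element of $\Deft(\Q)$ is the additive inverse in $K_0(\Q)$ of another element of $\Deft(\Q)$, i.e. there are no $\alpha, \beta \in \Deft(\Q)$ with $\alpha \ne 0$ and $\alpha + \beta = 0$ in $K_0(\Q)$.

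The key step is thus to show that $\alpha + \beta = 0$ in $K_0(\Q)$ forces $\alpha = \beta = 0$. By cancellativity of $\Deft(\Q)$, the relation $(\alpha, 0) \sim (0, 0)$ in the Grothendieck construction means $\alpha + 0 + \gamma = 0 + 0 + \gamma$ for some $\gamma \in \Deft(\Q)$, which by cancellation gives $\alpha = 0$ directly; so $\alpha + \beta = 0$ unpacks (via the definition of $\sim$ together with cancellativity) to $\alpha + \beta + \gamma = \gamma$, hence $\alpha + \beta = 0$ \emph{in the semiring} $\Deft(\Q)$. So the whole problem reduces to: in $\Deft(\Q)$, $\alpha + \beta = 0$ implies $\alpha = 0$. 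This is a statement about the combinatorial semiring $\widetilde{\D}(\Q)$ computed in \S\ref{s9}. Here I would use the additivity of the global characteristic: represent $\alpha = [D_1]$, $\beta = [D_2]$ with $D_1, D_2$ disjoint in some $\Dabar$; then $0 = [D_1 \sqcup D_2]$ means $D_1 \sqcup D_2$ is in definable bijection with a singleton (the representative of $0$), hence by Corollary \ref{c1} has height $0$, so $D_1 \sqcup D_2$ is (a disjoint union lying inside) a single related set of height $0$ — but a height-$0$ related set in $\Dabar$ is just a point, and the only way a point decomposes as a disjoint union $D_1 \sqcup D_2$ is $D_1 = \emptyset$ or $D_2 = \emptyset$. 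More carefully, using Equation \eqref{e4}, $\GC(D_1) + \GC(D_2) = \GC(D_1 \sqcup D_2) = \GC(\text{pt})$, and since $\GC$ takes values in $\bigoplus_{T} \N$ where addition is cancellative and the value on a point is the single basis vector indexed by the class of a height-$0$ related set, one of $\GC(D_1), \GC(D_2)$ must be $0$; a definable set with zero global characteristic is empty (every nonempty definable set has a nonempty atomic decomposition), so $\alpha = 0$ or $\beta = 0$, and then the other is also $0$. This handles axiom (5).

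The main obstacle I anticipate is bookkeeping rather than conceptual: one must be careful that the reduction from the ring-level relation $\alpha + \beta = 0$ to the semiring-level relation genuinely uses cancellativity correctly (the element $\gamma$ appearing in the definition of $\sim$ cannot simply be dropped without it), and one must confirm that the value of $\GC$ on a point really is a single standard basis vector and that $\bigoplus_T \N$ has the property that a sum of two elements equalling a basis vector forces one summand to vanish — both routine once the definitions from \S\ref{s8}--\S\ref{s9} are in hand. An alternative, perhaps cleaner, route avoiding the semiring manipulation entirely: define a semiring homomorphism $h \colon \Deft(\Q) \to \N$ by sending $[{}^aR_n]$ appropriately (e.g. $[{}^aR_n] \mapsto$ some positive integer, or more robustly use the total ``size'' given by summing local characteristics with suitable weights so that $h$ is additive and multiplicative and $h([\text{pt}]) = 1$, $h(\alpha) \geq 1$ for $\alpha \ne 0$); since $\N$ is a partially ordered ring after passing to $K_0(\N) = \Z$, $h$ extends to $K_0(\Q) \to \Z$, and pulling back the positive part of $\Z$ gives the desired $P$, with axiom (5) inherited from $\Z$. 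I would likely present the direct argument but mention this homomorphism viewpoint as the reason the result is clean. Either way the proof is short given all the machinery already developed.
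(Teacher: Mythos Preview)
Your approach is correct in outline but contains a slip: the representative of $0\in\Deft(\Q)$ is the empty set, not a singleton (a singleton represents $1$; the paper's ``$0:=[\{\emptyset\}]$'' is an unfortunate typo). With that fix your verification of axiom~(5) becomes even simpler than you wrote: from $\alpha+\beta=0$ in $\Deft(\Q)$ you get $[D_1\sqcup D_2]=[\emptyset]$, hence $D_1\sqcup D_2$ is in definable bijection with $\emptyset$, hence $D_1\sqcup D_2=\emptyset$, so $D_1=D_2=\emptyset$. No height argument and no analysis of $\GC$ at a point is needed.

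The paper argues in the other direction through Theorem~\ref{ts8.2} and is much shorter: it observes that Lemma~\ref{main-lemma} directly rules out a definable bijection between a definable set and a proper definable subset, since $D_1\subsetneq D_2$ forces $\chi_n^{\overline c}(D_1)\lneq\chi_n^{\overline c}(D_2)$ componentwise in $\bigoplus_T\N$ for \emph{every} parameter set $\overline c$, contradicting the conclusion of the lemma. Thus PHP holds, and the partial order on $K_0(\Q)$ follows from Theorem~\ref{ts8.2}. Your route instead establishes the partial-order side first (axioms (1)--(5) for $P$ the image of $\Deft(\Q)$) and deduces PHP; the essential input is cancellativity (Theorem~\ref{cancellative}), itself a consequence of Lemma~\ref{main-lemma}. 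So both arguments rest on the same lemma, but the paper's use of it is a one-line observation, whereas you are in effect re-deriving a fragment of the general equivalence in Theorem~\ref{ts8.2} for this particular structure.
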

\begin{proof}
Lemma \ref{main-lemma} implies that there cannot be a definable bijection between a definable set and its proper definable subset. Hence in view of Theorem \ref{ts8.2} we have the result.
\hfill
\end{proof}

The classes of related sets in $K_0(\Q)$ satisfy some nice convolution-type relations.
\begin{theorem}\label{convolution}
For $a,b\in Q\sqcup\{-\infty\}$ and $a<b$, let $$^nf(b,a):=\zeta^{-1}(\big[^{(b,a)}R_{(0,n)}\big]),$$ then 
\begin{enumerate}
    \item for any $c\in Q\sqcup\{-\infty\}$ such that $a<c<b$ we have 
$$^nf(b,a)=\sum\limits_{\substack{i=0}}^n {^if(b,c)}\ { ^{n-i}f(c,a)}+\sum\limits_{\substack{i=0}}^{n-1} {^if(b,c)}\  { ^{n-1-i}f(c,a)},$$
\item $(n!)( { ^nf(b,a)})=\prod\limits_{i=0}^{n-1} ( ^1f(b,a)-i).$
\end{enumerate}
\end{theorem}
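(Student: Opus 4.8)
The plan is to prove both parts by interpreting the statement geometrically in the semiring $\Deft(\Q)$ via the isomorphism $\zeta$, and then transporting the resulting identities back through $\zeta^{-1}$. For part (1), I would work with the related set $^{(b,a)}R_{(0,n)}$, whose defining formula orders the $n$ variables strictly between $a$ and $b$: $b>X_1>\cdots>X_n>a$. Fix an intermediate parameter $c$ with $a<c<b$ and pass to the finer Boolean algebra $\D_n^{(b,c,a)}$ (Remark \ref{rmk2}): each variable $X_j$ must land in the open interval $(c,b)$, in the open interval $(a,c)$, or be equal to $c$, but since distinct variables in a related set are linearly ordered, at most one can equal $c$. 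Splitting $^{(b,a)}R_{(0,n)}$ according to which of the $X_j$ fall above $c$ (say $i$ of them, lying in $(c,b)$) and which fall below (the remaining $n-i$, lying in $(a,c)$), and whether or not one variable equals $c$, gives exactly the decomposition $\Split^{(b,c,a)}_n({^{(b,a)}R_{(0,n)}})$. Collecting the classes of these atoms and using the product formula from Proposition \ref{p3.1}(1) — which says precisely that gluing a related set on the interval $(c,b)$ to one on $(a,c)$ corresponds to multiplication in $\Deft(\Q)$ — yields in $\Deft(\Q)$ the identity
\[
\big[{}^{(b,a)}R_{(0,n)}\big]=\sum_{i=0}^{n}\big[{}^{(b,c)}R_{(0,i)}\big]\big[{}^{(c,a)}R_{(0,n-i)}\big]+\sum_{i=0}^{n-1}\big[{}^{(b,c)}R_{(0,i)}\big]\big[{}^{(c,a)}R_{(0,n-1-i)}\big],
\]
the first sum accounting for the case when no variable equals $c$ and the second for the case when exactly one does (leaving $n-1$ variables to distribute). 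Applying the ring isomorphism $\zeta^{-1}$ and recalling the definition ${}^nf(b,a):=\zeta^{-1}([{}^{(b,a)}R_{(0,n)}])$ gives part (1) verbatim.

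For part (2), I would specialize and iterate. First note that $^1f(b,a)=\zeta^{-1}([{}^{(b,a)}R_{(0,1)}])$ is the class, under $\zeta^{-1}$, of a single point strictly between $a$ and $b$; by Theorem \ref{iso} and the defining relations of the ideal $I$, together with Proposition \ref{p3.1}(2)–(3), the element $[{}^{(b,a)}R_{(0,n)}]$ can be expressed as a polynomial in $[{}^{(b,a)}R_{(0,1)}]$. The cleanest route is to establish the recursion $[{}^{(b,a)}R_{(0,1)}]\cdot[{}^{(b,a)}R_{(0,n)}]=(n+1)[{}^{(b,a)}R_{(0,n+1)}]+n[{}^{(b,a)}R_{(0,n)}]$: a point $Y\in(a,b)$ multiplied against an $n$-chain $b>X_1>\cdots>X_n>a$ either slots into one of the $n+1$ gaps (giving an $(n{+}1)$-chain, in $n+1$ ways) or coincides with one of the $n$ existing variables (giving back an $n$-chain, in $n$ ways) — this is the two-parameter analogue of Proposition \ref{p3.1}(2) with $k=n$, $l=1$, specialized to the segment strictly between $a$ and $b$, and in fact follows from part (1) of the present theorem by choosing the intermediate point appropriately, or directly from the product computation. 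Writing $x:={}^1f(b,a)$ and $f_n:={}^nf(b,a)$, the recursion reads $x f_n=(n+1)f_{n+1}+n f_n$, i.e. $(n+1)f_{n+1}=(x-n)f_n$. Since $f_0=\zeta^{-1}(1)=1$, an immediate induction gives $(n!)\,f_n=\prod_{i=0}^{n-1}(x-i)$, which is precisely part (2). (The division by $n+1$ at each stage is legitimate: the identity is being asserted in the polynomial ring $\mathcal{O}$ over $\Z$, but one proves $(n+1)!\,f_{n+1}=(x-n)(n!f_n)=(x-n)\prod_{i<n}(x-i)$ without ever dividing, and reads off the stated form.)

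The main obstacle is making the atomic decomposition in part (1) fully rigorous — specifically, checking that $\Split^{(b,c,a)}_n({}^{(b,a)}R_{(0,n)})$ is exactly the set of related sets indexed by (number above $c$, number below $c$, whether one variable hits $c$), and that each such atom's class in $\Deft(\Q)$ is the claimed product. This requires care because the atoms live in a Boolean algebra over three parameters while the factors ${}^{(b,c)}R_{(0,i)}$ and ${}^{(c,a)}R_{(0,n-i)}$ naturally live over two parameters each; one must invoke Remark \ref{rmk2} to embed everything into a common $\D_n^{(b,c,a)}$, confirm via Definition \ref{d5} that the equivalence class of the glued set is determined by the component counts on each side of $c$, and then apply Proposition \ref{p3.1}(1). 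Everything else — the recursion in part (2) and the induction — is routine once part (1) (or the direct product computation behind it) is in hand.
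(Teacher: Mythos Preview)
Your proposal is correct and follows essentially the same route as the paper: for part (1) the paper simply invokes Proposition \ref{p3.1} parts (1) and (3) (the atomic decomposition after adjoining the intermediate parameter $c$, together with the product rule for adjacent intervals), which is exactly the computation you spell out, and for part (2) the paper derives the same recursion $x f_{n-1}=n f_n+(n-1)f_{n-1}$ from Proposition \ref{p3.1}(2) with $m=1$ and then iterates. The only difference is that you are more explicit about working in the two-parameter setting $[{}^{(b,a)}R_{(0,n)}]$ rather than the one-parameter $[{}^aR_n]$ to which Proposition \ref{p3.1}(2) is literally stated; the paper tacitly identifies these (the combinatorics of interleaving chains is the same on a bounded interval), and your remark that the recursion is ``the two-parameter analogue'' makes this step cleaner.
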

\begin{proof}
\begin{enumerate}[leftmargin=*]
    \item Recall that $\zeta$ is an isomorphism. Hence $^nf(b,a)$ satisfies same relations as $[^{(b,a)}R_{0,n}]$. Therefore by part (1) and (3) of Proposition \ref{p3.1} we have the result.
    \item Using part (2) of Proposition \ref{p3.1} putting $m=1$ and $n=n-1$ we have $n[ ^aR_{n}]+(n-1)[ ^aR_{n-1}]=[ ^aR_{n-1}][ ^aR_1]$. Hence we have
    $$n ^nf(b,a)+(n-1) ^{n-1}f(b,a)=\  ^{n-1}f(b,a)\  ^1f(b,a).$$
    Repeated use of the above equation gives us the required result.
\end{enumerate}
\hfill
\end{proof}

The ideal of the polynomial ring defined by Equation \eqref{ideal} has a nice property that each of its element has a positive integer multiple in a  smaller ideal.
\begin{theorem} Let $I$ be the ideal of the polynomial ring as in Equation \eqref{ideal}. Let
\begin{equation}\label{small_ideal}
  I':=\bigg\langle (k!)  (^aX_k) - \prod\limits_{i=0}^{k-1} (^aX_1 - i) \ | \  k \in \N_{+}; a \in Q \sqcup \{-\infty\} \bigg\rangle  
\end{equation}
be an ideal contained in $I$. Suppose for each $k\in\N_{+}$ we denote the formal variable $^aX_k$ by $X_k$. Then for a fixed $a\in Q\sqcup\{-\infty\}$ we have
\begin{align}
   \bigg({X_k} \ \prod\limits _{i=0}^{l-1} (X_1 - i) \bigg) -\big( l! \big) \bigg(\sum\limits _{i=0}^{l} \binom{k+i}{i}\binom{k}{l-i} {X_{k+i}} \bigg)\equiv 0 \ (\mbox{mod }I')\quad\forall l\leq k \in \N_{+}.
\end{align}
\end{theorem}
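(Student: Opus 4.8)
The plan is to work inside $\Z[{}^aX_n\mid n\in\N_{+},\,a]/I'$; fix $a$ and abbreviate $X_m:={}^aX_m$. The defining generator of $I'$ with index $m$ gives the congruence $(m!)X_m\equiv\prod_{i=0}^{m-1}(X_1-i)\pmod{I'}$ for every $m\in\N_{+}$ (trivial for $m=1$). Write $P_m:=\prod_{i=0}^{m-1}(X_1-i)\in\Z[X_1]$ and $e_m:=(m!)X_m-P_m$, so that $I'=\langle e_m\rangle$. Two reductions are immediate. First, since $e_l\in I'$ we get $X_k\prod_{i=0}^{l-1}(X_1-i)=X_kP_l\equiv (l!)X_kX_l\pmod{I'}$, so the asserted congruence is equivalent to
\[
(l!)\Big(X_kX_l-\sum_{i=0}^{l}\binom{k+i}{i}\binom{k}{l-i}X_{k+i}\Big)\ \equiv\ 0\pmod{I'};
\]
that is, $l!$ times the generator of $I$ from \eqref{ideal} indexed by $(k,l)$ lies in $I'$. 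Second, I would clear denominators: multiplying this left-hand side by $k!$ and substituting $P_m$ for $(m!)X_m$ for $m=k$ and $m=k+i$, one lands in $\Z[X_1]$ alone.

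The substitution is legal because of the elementary identity, for $0\le i\le l\le k$,
\[
(k!)(l!)\binom{k+i}{i}\binom{k}{l-i}\ =\ \binom{l}{i}\,\frac{k!}{(k-l+i)!}\,(k+i)!,
\]
whose prefactor $\binom{l}{i}\frac{k!}{(k-l+i)!}=\binom{l}{i}\,k(k-1)\cdots(k-l+i+1)$ is a positive integer. Granting it, $k!\,l!$ times the bracketed element collapses modulo $I'$ to
\[
P_kP_l-\sum_{i=0}^{l}\binom{l}{i}\,\frac{k!}{(k-l+i)!}\,P_{k+i}\ \in\ \Z[X_1].
\]

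It remains to check this polynomial is identically zero. Using $P_m=m!\binom{X_1}{m}$ and dividing by $k!\,l!$ it becomes $\binom{X_1}{k}\binom{X_1}{l}-\sum_{i=0}^{l}\binom{k+i}{i}\binom{k}{l-i}\binom{X_1}{k+i}$, which is exactly the polynomial shadow of the semiring relation $[{}^aR_l][{}^aR_k]=\sum_i\binom{k+i}{i}\binom{k}{l-i}[{}^aR_{k+i}]$ of Proposition \ref{p3.1}(2): both sides are polynomials of degree $k+l$ agreeing at every non-negative integer value of $X_1$, by the same bijective count as in that proof, hence they are equal.

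The step I expect to be the main obstacle is the final factorial bookkeeping: the argument above only shows that a positive integer multiple (namely $k!\,l!$, or at worst $l!$ of the generator of $I$) of the element in question lies in $I'$, and one must still descend from ``a positive integer multiple lies in $I'$'' to ``the element itself lies in $I'$''. Carrying this out cleanly requires either exhibiting an explicit combination $\sum_m h_m e_m$ for the element with the cofactors $h_m$ engineered so that the factorials cancel precisely against the binomial coefficients $\binom{k+i}{i}\binom{k}{l-i}$, or an ad hoc torsion computation inside $\Z[X_\bullet]/I'$; by contrast the divisibility lemma and the convolution identity above are routine once Proposition \ref{p3.1}(2) is in hand.
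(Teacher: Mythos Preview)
Your reduction is sound: since $P_l\equiv(l!)X_l\pmod{I'}$, the element $\Theta:=X_k\prod_{i=0}^{l-1}(X_1-i)-l!\sum_{i}\binom{k+i}{i}\binom{k}{l-i}X_{k+i}$ is congruent modulo $I'$ to $l!$ times the $(k,l)$-generator of $I$, and after multiplying by $k!$ and replacing each $(m!)X_m$ by $P_m$ one lands on the identity $\binom{X_1}{k}\binom{X_1}{l}=\sum_{i}\binom{k+i}{i}\binom{k}{l-i}\binom{X_1}{k+i}$ in $\Z[X_1]$. So you have honestly established $(k!)\Theta\in I'$.

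The descent you flag is not merely the main obstacle---it is impossible, because the statement is false as written. Take $k=2,\ l=1$, so $\Theta=X_1X_2-2X_2-3X_3$. Under the ring map $X_1\mapsto 2,\ X_2\mapsto 1,\ X_3\mapsto 1,\ X_m\mapsto 0\ (m\ge 4)$, every generator $e_m=(m!)X_m-P_m(X_1)$ of $I'$ maps to $0$ except $e_3\mapsto 6$, so the image of $I'$ in $\Z$ is $6\Z$, while $\Theta\mapsto-3\notin 6\Z$; hence $\Theta\notin I'$. (One does have $2\Theta=(X_1-2)e_2-e_3\in I'$, so the Corollary following the theorem---that some factorial multiple of each element of $I$ lies in $I'$---survives, and your argument already proves it with the multiplier $k!\,l!$.)

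The paper's proof proceeds differently: instead of multiplying by $k!$, it invokes the auxiliary congruence of Equation~\eqref{e10}, namely $\prod_{i=1}^{j}(k+i)\,X_{k+j}\equiv X_k\prod_{s=0}^{j-1}(X_1-k-s)\pmod{I'}$, to pull a common factor of $X_k$ out of every summand, after which the claim reduces to a one-variable polynomial identity verified by evaluation at $l+1$ integer points. The polynomial identity is correct, but Equation~\eqref{e10} only holds modulo $I$ (it is derived from the recursion in the proof of Theorem~\ref{convolution}(2), which rests on the $I$-relation from Proposition~\ref{p3.1}(2)), not modulo $I'$: for $k=2,\ j=1$ it would read $3X_3\equiv X_2(X_1-2)\pmod{I'}$, which the same evaluation refutes. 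So the paper's argument contains exactly the gap you anticipated, concealed in the appeal to \eqref{e10}; your caution was well placed.
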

\begin{proof}
By part (2) of Theorem \ref{convolution} we have:
\begin{equation}\label{eq-special}
    k! X_k \equiv X_1(X_1 -1)(X_1-2)\ldots(X_1 - (k-1))\  (\mbox{mod }I'),
\end{equation}
and by proof of part (2) of Theorem
\ref{convolution}
we have 
\begin{equation}\label{e10}
     \prod\limits_{i=1}^l(k+i) X_{k+l} = X_k (\prod\limits_{j=0}^{l-1}(X-k-j)\ (\mbox{mod }I').
\end{equation}
Equation \eqref{e9} in terms of formal variables $X_k$ and $X_l$ can be equivalently written as 
\begin{equation}\label{e11}
l! X_k X_l \equiv l! X_k \binom{k}{l} + \sum\limits_{i=1}^l \big(\ ^lC_i\ ^kP_{l-i} X_{k+i}(\prod\limits_{s=1}^i(k+s))\big) \ (\mbox{mod }I).
\end{equation}
Using Equation \eqref{e10} and Equation \eqref{e11} our work reduces to showing 
\begin{equation}\label{e12}
l! (X_l) \equiv (^kP_l) + \sum\limits_{i=1}^l \big(\ ^lC_i\ ^kP_{l-i}\prod\limits_{j=0}^{i-1}(X_1 - k- j)\big) \ (\mbox{mod }I').
\end{equation}
Let
\begin{equation*}
    P_1(X) := \ ^kP_l + \sum\limits_{i=1}^l \big(\ ^lC_i\ ^kP_{l-i}\ 
\prod\limits_{j=0}^{i-1}(X - k- j)\big)
\end{equation*}
and 
\begin{equation*}
    P_2(X) : = X(X -1)(X-2)\ldots(X- l+1).
\end{equation*}
Note that it suffices to show  $P_1(X)=P_2(X) $ in $\Z[X]$ (Equation \eqref{e10}).
We evaluate $P_1(X)$ at $X = k+ t$, for each $0\leq t\leq l$ and show that $P_2(k+t) - P_1(k+t) = 0$. Since we are working in the integral domain ($\mathbb{Z}[X]$), polynomial of degree $l$ can have at most $l$ zeros, which will force $P_1 - P_2 $ to be identically zero. 

The proof is via induction on $t$. For $t=0$, checking is straightforward as the summation term vanishes.
For $t = r$, assume that $P_1(k+r) = P_2(k+r)$. Note that $P_2(k+r)=\ ^{k+r}P_l$. Now, for $t=r+1$, 
\begin{align*}
P_1(k+r+1) &=\ ^kP_l+ \sum\limits_{i=1}^{l}\big( \ ^lC_i\ ^kP_{l-i}\prod\limits_{j=0}^{i-1 }(r+1 - j)\big)\\ 
&=\ ^kP_l+ \sum\limits_{i=1}^{r+1} \big(\ ^lC_i\ ^kP_{l-i}\prod\limits_{j=0}^{i-1 }(r+1 - j)\big)\ \mbox{(for }i>r+1\mbox{ product term vanishes)}\\
&=\  ^kP_l + \sum\limits_{i=1}^r \big( \ ^lC_i\ ^kP_{l-i}\prod\limits_{j=0}^{i-1 }(r-j)\big) +   \sum\limits_{i=1}^r \big(\ ^lC_i\ ^kP_{l-i} \prod\limits_{j=0}^{i-1}(r+1 - j) - \prod\limits_{j=0}^{i-1}(r - j)  \big)\\ 
    &+\ ^lC_{r+1}\ ^kP_{l-r-1} \prod\limits_{j=0}^r(r+1-j).
\end{align*}
Note that, if $i>1$, then
\begin{equation}\label{e13}
\prod\limits_{j=0}^{i-1}(r+1 - j) - \prod\limits_{j=0}^{i-1}(r - j) =  i\prod\limits_{j=0}^{i-2}(r-j),
\end{equation}
  and for $i=1$, $\prod\limits_{j=0}^{i-1}(r+1 - j) - \prod\limits_{j=0}^{i-1}(r - j) = 1$. Using induction hypothesis we have   
  $$^kP_l + \sum\limits_{i=1}^r\big( \ ^lC_i\ ^kP_{l-i}\prod\limits_{j=0}^{i-1 }(r-j)\big)=\ ^{k+r}P_l.$$ 
  Using Equation \eqref{e13}, we have
  \begin{align*}
   P_1(k+r+1)&=\ ^{k+r}P_l + l\ ^kP_{l-1} 
   + l\Big(\sum\limits_{i=2}^{r} \big(\ ^{l-1}C_{i-1}\ ^kP_{l-i} \prod\limits_{j=0}^{i-2}(r-j)\big)\Big)\\  &+ \ ^lC_{r+1}\ ^kP_{l-r-1}\prod\limits_{j=0}^r(r+1-j).
 \end{align*}
We foucs on the term $l ^kP_{l-1} 
   + l\Big(\sum\limits_{i=2}^{r} \big((^{l-1}C_{i-1}\ ^kP_{l-i}\prod\limits_{j=0}^{i-2}(r-j)\big)\Big) + \ ^lC_{r+1} \ ^kP_{l-r-1}$ in the above equation. Re-indexing gives us 
\begin{equation}\label{e14}
    l \big(^kP_{l-1} \ + \ \sum\limits_{i=1}^{r-1} \ ^{l-1}C_{i}\ ^kP_{l-i-1} \prod\limits_{j=0}^{i-1}(r-j)\big)  + \ ^lC_{r+1}\ ^kP_{l-r-1}\prod\limits_{j=0}^r(r+1-j). 
    \end{equation} 
Note that
\begin{align*}
    ^lC_{r+1}\ ^kP_{l-r-1} \prod\limits_{j=0}^r(r+1-j)&=\ ^{l-1}C_{r}
\bigg(\frac{l}{r+1}\bigg) \ ^kP_{l-r-1} \prod\limits_{j=0}^r(r+1-j)\\
&=l\ ^{l-1}C_{r} \ ^kP_{l-r-1} \prod\limits_{j=1}^r(r+1-j)\\
&= l\ ^{l-1}C_{r} \ ^kP_{l-r-1} \prod\limits_{j=0}^{r-1}(r-j).
\end{align*}
Combining this with Equation \eqref{e14}
we have 
\begin{align*}
    P_1(k+r+1)&=\ ^{k+r}P_l+ l\ ^kP_{l-1}+ l\sum\limits_{i=1}^{r}\ ^{l-1}C_{i}\ ^kP_{l-i-1}\prod\limits_{j=0}^{i-1}(r-j))\\
    &=\ ^{k+r}P_l + l\ ^{k+r}P_{l-1}\\
    &=\ ^{k+r+1}P_l\\
    &= P_2(k+r+1).
    \end{align*}
This establishes the theorem.
\hfill
\end{proof}

\begin{corollary}
Let $I, I'$ be the ideals defined by Equations \eqref{ideal} and \eqref{small_ideal} respectively.
Given an element $ \alpha\in I$ there exists an integer $n_{\alpha}$ such that $(n_{\alpha}!) \alpha \in I'$.
\end{corollary}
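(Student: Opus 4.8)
The plan is to derive the corollary from the polynomial identity proved in the theorem above, applied one generator of $I$ at a time, and then to globalize by clearing factorials. First I would fix $a\in Q\sqcup\{-\infty\}$ and, following the notation of that theorem, write $X_m$ for ${}^aX_m$. By Equation \eqref{eq-special} with $k$ replaced by $l$ we have $l!\,X_l\equiv\prod_{i=0}^{l-1}(X_1-i)\pmod{I'}$, and since $I'$ is an ideal we may multiply this congruence through by $X_k$ to obtain $l!\,X_kX_l\equiv X_k\prod_{i=0}^{l-1}(X_1-i)\pmod{I'}$. The conclusion of the theorem above says precisely that $X_k\prod_{i=0}^{l-1}(X_1-i)\equiv (l!)\sum_{i=0}^{l}\binom{k+i}{i}\binom{k}{l-i}X_{k+i}\pmod{I'}$, so chaining the two congruences yields
\[
(l!)\Bigl({}^aX_k\,{}^aX_l-\sum_{i=0}^{l}\binom{k+i}{i}\binom{k}{l-i}{}^aX_{k+i}\Bigr)\in I'
\qquad (0\le l\le k,\ a\in Q\sqcup\{-\infty\}).
\]
Writing $g^a_{k,l}$ for the displayed generator of $I$ in Equation \eqref{ideal}, this says $(l!)\,g^a_{k,l}\in I'$; a fortiori, since $l!$ divides $k!$, also $(k!)\,g^a_{k,l}\in I'$.

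Next I would pass from generators to an arbitrary $\alpha\in I$. Fix a representation $\alpha=\sum_{j=1}^{N}h_j\,g^{a_j}_{k_j,l_j}$ as a finite combination of generators of $I$ with coefficients $h_j$ in the polynomial ring; such a representation exists by definition of $I$, and although it is not unique we only need one. Set $n_\alpha:=\max_{1\le j\le N}k_j$. For each $j$ we have $l_j\le k_j\le n_\alpha$, so $k_j!$ divides $n_\alpha!$ and $n_\alpha!/k_j!$ is an integer; since $(k_j!)\,g^{a_j}_{k_j,l_j}\in I'$ and $I'$ is an ideal, $n_\alpha!\,h_j\,g^{a_j}_{k_j,l_j}=(n_\alpha!/k_j!)\cdot h_j\cdot(k_j!)\,g^{a_j}_{k_j,l_j}\in I'$ for every $j$. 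Summing over $j$ gives $n_\alpha!\,\alpha\in I'$, which is exactly the assertion.

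There is no genuinely hard step remaining: the substance of the matter is the identity $P_1=P_2$ of the preceding theorem, from which the per-generator statement $(l!)\,g^a_{k,l}\in I'$ falls out after a single multiplication by $X_k$ and one invocation of Equation \eqref{eq-special}. The only points that need a little care are bookkeeping ones — pinning down one finite expression of $\alpha$ in terms of the generators (uniqueness is irrelevant, existence of some $n_\alpha$ suffices), and observing that $k_j!$ divides $n_\alpha!$ for all $j$, which is what permits clearing every factorial simultaneously with a single global $n_\alpha$ (any integer $\ge\max_j k_j$, indeed any $\ge\max_j l_j$, works equally well).
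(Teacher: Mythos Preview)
Your argument is correct and is precisely the intended one: the paper's own proof reads in its entirety ``Directly follows from the above theorem,'' and you have simply written out what that phrase means---multiplying the congruence $l!\,X_l\equiv\prod_{i=0}^{l-1}(X_1-i)$ by $X_k$, invoking the theorem's identity to get $(l!)\,g^a_{k,l}\in I'$ for each generator, and then clearing finitely many factorials at once with $n_\alpha=\max_j k_j$. There is nothing to add or correct.
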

\begin{proof}
Directly follows from the above theorem.
\hfill
\end{proof}

\begin{rmk}\label{CCrmk}
It can be easily observed from Lemma \ref{main-lemma} that the combinatorial principles CC1 and CC2 (Definition \ref{CC}) fail in a DLO without end points.
\end{rmk}

\addcontentsline{toc}{section}{References}
\bibliography{bibtex.bib}

\begin{thebibliography}{1}

\bibitem{Ax}
James Ax.
\newblock The elementary theory of finite fields.
\newblock {\em Annals of Mathematics}, 88(2):239--271, 1968.

\bibitem{cluckers}
Raf Cluckers.
\newblock Grothendieck rings of laurent series fields.
\newblock {\em Journal of Algebra}, 272(2):692 -- 700, 2004.

\bibitem{cluc_hal}
Raf Cluckers and Immanuel Halupczok.
\newblock Definable sets up to definable bijections in {P}resburger groups.
\newblock {\em Transactions of the London Mathematical Society}, 5(1):47--70,
  2018.

\bibitem{cluc_hask}
Raf Cluckers and Deirdre Haskell.
\newblock Grothendieck rings of $\mathbb{Z}$-valued fields.
\newblock {\em Bulletin of Symbolic Logic}, 7(2):262 -- 269, 2001.

\bibitem{hodges}
Wilfrid Hodges.
\newblock {\em Model Theory}.
\newblock Encyclopedia of Mathematics and its Applications. Cambridge
  University Press, 1993.

\bibitem{krajicek}
Jan Kraji{\v c}ek.
\newblock Uniform families of polynomial equations over a finite field and
  structures admitting an euler characteristic of definable sets.
\newblock {\em Proceedings of the London Mathematical Society}, 81(2):257--284,
  2000.

\bibitem{scanlon}
Jan Kraji{\v c}ek and Thomas Scanlon.
\newblock Combinatorics with definable sets: Euler characteristics and
  grothendieck rings.
\newblock {\em The Bulletin of Symbolic Logic}, 6(3):311--330, 2000.

\bibitem{kuber}
Amit Kuber.
\newblock {\em K-Theory of theories of modules and algebraic varieties}.
\newblock PhD thesis, University of Manchester, 2014.

\bibitem{kuber2}
Amit Kuber.
\newblock Grothendieck rings of theories of modules.
\newblock {\em Annals of Pure and Applied Logic}, 166(3):369 -- 407, 2015.

\end{thebibliography}
\bibliographystyle{plain}

\end{document}